\newtheorem{theorem}{Theorem}[section]
\newtheorem{proposition}[theorem]{Proposition}
\newtheorem{corollary}[theorem]{Corollary}
\newtheorem{lemma}[theorem]{Lemma}
\newtheorem{remark}[theorem]{Remark}
\newtheorem{example}[theorem]{Example}
\titleformat*{\section}{\large\bfseries}
\titleformat*{\subsection}{\normalsize\bfseries}
\begin{document}

\title{A stochastic maximum principle of mean-field type with monotonicity conditions\thanks{The work is supported by the NSF of P.R. China (NOs. 12031009, 11871037), National Key R and D Program of China (NO. 2018YFA0703900), and NSFC-RS (No. 11661130148; NA150344).}}

\author{Bowen HE$^{1}$,\,\,Juan LI$^{1,2,\dag}$,\,\,Zhanxin LI$^{1,}$\footnote{Corresponding authors.}\\
 {$^1$\small School of Mathematics and Statistics, Shandong University, Weihai, Weihai 264209, P.~R.~China.}\\
 {$^2$\small  Research Center for Mathematics and Interdisciplinary Sciences, Shandong University,}\\
	{\small Qingdao 266237, P. R. China.}\\
 {\small{\it E-mails: hbw@mail.sdu.edu.cn,\,\ juanli@sdu.edu.cn,\,\  zhanxinli@mail.sdu.edu.cn.}}	}
 \date{March 15, 2025}
\renewcommand{\thefootnote}{\fnsymbol{footnote}}
\maketitle
\noindent\textbf{Abstract.}
The objective of this paper is to weaken the Lipschitz condition to a monotonicity condition and to study the corresponding Pontryagin  stochastic maximum principle (SMP) for a mean-field optimal control problem under monotonicity conditions.
The dynamics of the controlled state process is governed by a mean-field stochastic differential equation (SDE) whose coefficients depend not only on the control, the controlled state process itself but also on its law, and in particular, these coefficients satisfy the monotonicity condition with respect to both the controlled state process and its distribution. The associated cost functional is also of mean-field type. Under the assumption of a convex control domain we derive the SMP, which provides a necessary optimality condition for control processes. Under additional convexity assumptions on the Hamiltonian, we further prove that this necessary condition is also a sufficient one. To achieve this, we first address the challenges related to the existence and the uniqueness of solutions for mean-field backward stochastic differential equations and mean-field SDEs whose coefficients satisfy monotonicity conditions with respect to both the solution as well as its distribution. On the other hand we also construct several illustrative examples demonstrating the generality of our results compared to existing literature.

\vskip 0.5cm

\noindent\textbf{Key words.}
 Stochastic maximum principle, mean-field stochastic differential equation, mean-field backward stochastic differential equation, monotonicity condition, Wasserstein metric.

\vskip 1cm

\section{Introduction}
Nonlinear backward stochastic differential equations (BSDEs) were first introduced by Pardoux and Peng \cite{ref41}, who established an existence and uniqueness result
of a solution of the following BSDE (1.1) under the Lipschitz assumption of the generator $g$:
\begin{equation}
y_t=\xi+\int_t^T g(s, y_s, z_s) d s-\int_t^T z_s \mathrm{d} W_s, \quad t \in[0, T],
\end{equation}
where the terminal $\xi$ is a square integral random variable, the generator of the BSDE (1.1) is a function $g(\omega, t, y, z): \Omega \times[0,T]\times\mathbb{R} \times \mathbb{R}^d \rightarrow \mathbb{R}$, which is progressively measurable for each $(y, z)$, and $W$ is a $d$-dimensional standard Brownian motion. The solution $(y_{.}, z.)$ is a pair of square integrable, adapted processes. The triple $(\xi, T, g)$ is called the parameters of the BSDE (1.1).

Since the pioneering work \cite{ref41}, many efforts have been done
to weaken the Lipschitz assumptions on the generator $g$. In particular, under the conditions that $g$
is continuous and monotonic with a general growth in $y$,  Lipschitz continuous in $z$, and $(g(t, 0, 0))_{t\in[0,T]}$ is square
integrable, Pardoux \cite{1999} proved the existence and the uniqueness of the solution to  BSDE (1.1). Furthermore, Briand et al. \cite{2007} proved the existence
of the solution to  BSDE (1.1) when the above Lipschitz continuity conditions is replaced by the continuity and linear growth
conditions. Moreover,  other generalizations can be found, for instance, in \cite{2010,2008}.

On the other hand, the modern control theory can refer to three  important achievements: Pontryagin's maximum principle, Bellman's  dynamic programming principle, and Kalman's filtering theory.  Many of the seminal works on the stochastic maximum principle are due to Kushner \cite{ref33,ref34}. Since then, huge number of works have been written on this subject, in particular, by Bensoussan \cite{ref5}, Bismut \cite{ref6}, Cadenillas, Karatzas \cite{ref17}, Elliott \cite{ref25}, Haussmann \cite{ref27} and so on.

Mean-field stochastic differential equations, also known as McKean-Vlasov equations, were initially discussed by Kac \cite{ref30,ref31}
as a stochastic ''toy model" for the Vlasov kinetic equation of plasma. Since then, these equations have attracted a lot of researchers in a variety of fields, such as physics, economics and mathematics. Lasry and  Lions \cite{ref35} studied the mean-field model in economics, finance and game theory for the first time in 2007. Motivated by these studies,  Buckdahn, Djehiche, Li and Peng \cite{ref11} introduced in 2009 a new type of BSDEs, the so-called mean-field BSDEs, by a purely stochastic approach. At the same time, Buckdahn, Li and Peng \cite{ref12} gave the corresponding comparison theorem and studied the generalized Feynman-Kac formula for the related partial differential equations. Li, Liang and Zhang \cite{JMAA} studied a class of mean-field BSDEs of the form
\begin{equation}\label{eqintro}
Y_t=\xi+\int_t^Tf(s,Y_s,Z_s,\mathbb{P}_{Y_s})ds-\int_t^TZ_sdW_s,\ \ 0\le t\le T,
\end{equation}
and they proved the existence of the minimum solution of the above equation  under continuity and linear growth conditions and gave the corresponding general comparison theorem first time.

Buckdahn, Djehiche and Li \cite{ref13} in 2011 studied the stochastic maximum principle for a mean field optimal control problem under Lipschitz condition, in which the coefficients of stochastic differential equations depend not only on the control and the state variable, but also on its expectation. More precisely, the  form of the control problem is as follows
\begin{equation}\label{eq 1.1}\tag{1.2}
\left\{
             \begin{array}{l}
           dx(t)=b(t,x(t),E[x(t)],u(t))dt+\sigma(t,x(t),E[x(t)],u(t))dW(t),\ t\in[0,T],\\
x(0)=x_{0} ,
             \end{array}
\right.
\end{equation}
with the cost functional
\begin{equation}\label{eq 1.2}\tag{1.3}
J(u(\cdot))=E\left[\int_0^Tf(t,x(t),E[x(t)],u(t))dt+h(x(T),E[x(T)])\right],
\end{equation}
where $f$ and $h$ are  given functions. However, here $J$ is, in general, nonlinear with respect to (w.r.t.) the expectation of $x(t)$, which leads to time inconsistency of the control problem. When the control field $\mathcal{U}$ is convex, Li \cite{ref36}, Andersson and Djehiche \cite{ref1} obtained the necessary and the sufficient conditions of the optimal control in their control problems, respectively. Since then, there are many works for such type of mean-field control problems, for example,  Meyer-Brandis, {\O}ksendal and Zhou \cite{ref39} employed Malliavin calculus to investigate the maximum principle for mean-field control problems under partial information, where the coefficients of the cost functional depend on the expectation of the state process, under the assumption of continuously differentiable coefficients.

Since Lions \cite{ref58} (see also  Cardaliaguet \cite{ref21})  gave the definition and a characterization of the derivatives of functions over a space of probability measures,  Buckdahn, Li and Ma \cite{ref15}  studied in 2016 the stochastic maximum principle of the following general controlled equations under Lipschitz conditions,
\begin{equation}\label{eq 1.3}\tag{1.4}
\left\{
             \begin{array}{l}
           dX^u_t=b(t,X^u_t,\mathbb{P}_{X^u_t},u_t)dt+\sigma(t,X^u_t,\mathbb{P}_{X^u_t},u_t)dW_t,\ t\in[0,T],\\
X^u_0=x_{0} ,
             \end{array}
\right.
\end{equation}
where the coefficients of the equation depend not only on the state variables, but also on the law of the state variables. The cost functional is given by
\begin{equation}\label{eq 1.4}\tag{1.5}
J(u)=E\left[\int_0^Tf(t,X^u_t,\mathbb{P}_{X^u_t},u_t)dt+h(X^u_T,\mathbb{P}_{X^u_T})\right].
\end{equation}
The stochastic maximum principle in this case is a further extension of the work by Buckdahn et al. in \cite{ref13}. They still studied the stochastic maximum principle of the corresponding control problem under Lipschitz condition.

  Orrieri \cite{ref40} in 2015 investigated the stochastic maximum principle where the dynamics is a classical SDE (without the mean-field term) whose drift coefficient satisfies the  monotonicity condition w.r.t. the state process and diffusion coefficient satisfies the  Lipschitz condition w.r.t. the state process. Boufoussi and  Mouchtabih \cite{BM23} studied the existence and the uniqueness of the
solutions to the mean-field BSDEs whose generators depend on the solution $(Y,Z)$ and the law of $Y$, with the assumption that generators are locally monotone in $y$, locally Lipschitz w.r.t. $z$ and law's variable. Chen, Nie and Wang \cite{CNW23}  studied a stochastic maximum principle where  the state dynamics is given
 by controlled mean-field FBSDEs, whose coefficients are monotone in $x$ and $y$, but Lipschitz w.r.t. $z$ and the joint distribution of the state processes
 and the control.

Inspired by  above works, our paper generalizes the stochastic control problem (1.4)-(1.5). We assume that the  coefficients of  equation (1.4) satisfy  conditions (H1)-(H3). 
We give some necessary estimates under these conditions, and we prove the stochastic maximum principle of Pontryagin type. In addition, different from the Lipschitz case, here the adjoint equation  is a mean-field BSDE under the conditions of monotonicity, so first we need to prove the existence and the  uniqueness of its solution. We adapt the idea of Darling and Pardoux \cite{Pardoux97} to prove the existence of the solution. But unlike their method in \cite{Pardoux97} because now our generator $f$ also depends on the measure and satisfies a monotonicity condition w.r.t. the measure, we have to construct a new type of sequence of Lipschitz functions to approximate the function $f$. For this, we introduce a new form of convolution function. We further provide several illustrating examples to show the extent to which we generalize the results in the literature.

Our paper is organized as follows: In Section 2 and 3 we recall some notions and prove some results of mean-field BSDEs and mean-field SDEs under the condition of monotonicity.
Finally, in Section 4 we give the formulation of our optimal control problem and study the stochastic maximum principle of Pontryagin type.

\section{Mean field BSDEs  with monotonicity conditions}

We consider a filtered complete probability space $(\Omega,\mathcal{F},\mathbb{F},\mathbb{P})$. Let $W=\{W_s\}_{s\geq 0}$ be a one-dimensional standard Brownian motion defined on this probability space and $T > 0 $ a given time horizon. We assume that there exists a sub-$\sigma$-field $\mathcal{F}_0\subset\mathcal{F}$ that is independent of  $\mathbb{F}^W$, the filtration generated by $W$, and ''rich enough" in the sense that for every $\mu\in \mathscr{P}_2(\mathbb{R}^d)$, there is a random variable $\vartheta\in L^2(\mathcal{F}_0;\mathbb{R}^d)$ such that $\mathbb{P}_{\vartheta}=\mu$. We denote $\mathbb{F}=\{\mathcal{F}_s,0\leq s\leq T\}\vee\mathcal{F}_0$  which is augmented by all the $\mathbb{P}$-null sets. For a generic Euclidean space $\mathbb{X}$, its inner product is denoted by $(\cdot,\cdot)$, its norm by $|\cdot|$, and its Borel $\sigma$-field by $\mathcal{B(\mathbb{X})}$.

We introduce the following spaces: for any sub-$\sigma$-field $\mathcal{G}\subseteq\mathcal{F}$ and $1\le p< \infty$,

\noindent$\bullet\ {L^p}(\mathcal{G};\mathbb{X})$\ :=\Big \{$\xi$:\ $\mathbb{X}$-valued, $\mathcal{G}$-measurable random variable, ${{\|\xi\|}_p}:=\mathbb{E}\big[{\|\xi\|}^p\big]^{\frac{1}{p}}<\infty$\Big\}. In particular, ${L^2}(\mathcal{G};{\mathbb{R}}^d)$ is a\ Hilbert space, where the inner product is \ $(\xi,\eta)_2:= E[\xi\cdot\eta]$,\ $\xi,\eta\in{L^2}(\mathcal{G};{\mathbb{R}}^d)$,\ and the norm is ${{\|\xi\|}_2}=\sqrt{(\xi,\xi)_2}$.

\noindent$\bullet\ L^p_{\mathbb{F}}(0,T;\mathbb{X})$:=\Big\{$\eta$:\ $\mathbb{X}$-valued\ $\mathbb{F}$-adapted process on $[0,T]$,
${{\|\eta\|}_{p,T}}:= E\big[\int_0^T{|\eta_t|}^p dt\big]^{\frac{1}{p}}<\infty$\Big\}.

\noindent$\bullet$ $\mathcal{S}^p(0,T;\mathbb{X}):=\Big\{x(\cdot):  \mathbb{X}\text{-valued}\  \mathbb{F}\text{-adapted RCLL process} \text{\ with}\ E\big[\sup\limits_{t\in[0,T]}|x(t)|^pdt\big]<\infty\Big\}$.



\noindent$\bullet\ \ \mathscr{P}_2(\mathbb{X})$\ :=\ \{$\mu$ : probability measure on \ $(\mathbb{X},\mathcal{B}(\mathbb{X}))$\  with $\int_{\mathbb{X}}|x|^2\mu(dx)<\infty$\ \}.  The space $\mathscr{P}_2(\mathbb{R}^d)$ is equipped with the  2-Wasserstein metric: For\ $\mu,\nu\in\mathscr{P}_2(\mathbb{R}^d)$,
\begin{equation}\label{eq 2.9}
\begin{split}
W_2(\mu,\nu):= \inf\bigg\{ \left[\int_{\mathbb{R}^{2d}}|x-y|^2\rho(dxdy)\right]^{\frac{1}{2}}:\rho\in\mathscr{P}_2(\mathbb{R}^{2d}),\\ \rho(\cdot\times\mathbb{R}^d)=\mu,\ \rho(\mathbb{R}^d\times\cdot)=\nu\bigg\}.
\end{split}
\end{equation}


\indent For a given set of parameters $\alpha$ and $\beta$, $C_{\alpha}$ or $C_{\alpha,\beta}$ will denote a positive
constant only depending on these parameters and may change from line to line. Below we give some results that will be used later.
\begin{theorem}\label{le2.4}
Under the assumptions\ $\rm(A1)-(A4)$ stated below, the following mean-field backward stochastic differential equation
\begin{equation}\label{eq 2.7}
Y_t=\xi+\int_t^Tf(s,Y_s,Z_s,\mathbb{P}_{(Y_s,Z_s)})ds-\int_t^TZ_sdW_s,\ \ 0\le t\le T,
\end{equation}
admits a unique $\mathbb{F}$-adapted solution  $(Y_t,Z_t)_{t\in[0,T]}\in\mathcal{S}^2(0,T;\mathbb{R})\times L^2_{\mathbb{F}}(0,T;\mathbb{R})$.
\begin{itemize}
\item[$\rm(A1)$] $f(s,\omega,y,z,\mu):[0,T]\times\Omega\times\mathbb{R}\times\mathbb{R}\times\mathscr{P}_2(\mathbb{R}^2)\rightarrow{\mathbb{R}}$ is $\mathbb{F}$-adapted for every $(y,z,\mu)\in \mathbb{R}\times\mathbb{R}\times\mathscr{P}_2(\mathbb{R}^2)$, and
    there exists a constant\ $K\ge 0$,\ such that\ $\mathbb{P}$-a.s.,\ for all\ $t\in[0,T],\ y,\ z\in\mathbb{R},\ Y,\ Z\in L^2(\mathcal{F};\mathbb{R})$,\
\begin{equation}\label{A11}
|f(s,y,z,\mathbb{P}_{(Y,Z)})|\leq K\big(1+|y|+\big(E[|Y|^2]\big)^{\frac{1}{2}}\big)+|f(s,0,z,\mathbb{P}_{(0,Z)})|.
\end{equation}

\item[$\rm(A2)$]
With the notation
$$
\widetilde{f}(s,\omega,y,z,m,\mathbb{P}_{(\mathring{\xi},\eta)}):= f(s,\omega,y,z,\mathbb{P}_{(\mathring{\xi}+m,\eta)}),
$$
$\widetilde{f}$ satisfies the following conditions: There exists a constant\ $\alpha_1>0$, 
such that
\begin{equation}\label{A12}
\begin{split}
&\big|\widetilde{f}(s,\omega,y,z,m,\mathbb{P}_{(\mathring{\xi},\eta)})-\widetilde{f}(s,\omega,y,z',m,\mathbb{P}_{(\mathring{\xi}',\eta')})\big|\le \alpha_1\big(\vert z-z' \vert+W_2(\mathbb{P}_{(\mathring{\xi},\eta)},\mathbb{P}_{(\mathring{\xi}',\eta')})\big),
\end{split}
\end{equation}
 for all\ $s\in[0,T],\ \omega\in\Omega,\ y,\ z,\ z',\ m\in\mathbb{R},\ \mathring{\xi},\ \mathring{\xi}'\in L_0^2(\mathcal{F};\mathbb{R})\big(:=\big\{\mathring{\xi}\in L^2(\mathcal{F};\mathbb{R}):E[\mathring{\xi}]=0\big\}\big),\ \eta,\ \eta'\in L^2(\mathcal{F};\mathbb{R})$.

\item[$\rm(A3)$] $\widetilde{f}$ is continuous w.r.t. $y$ and $m$, and there exist constants\ $\alpha_2,\ \alpha_3\ge 0$,\ such that for all\ $s\in[0,T],\ y,\ y',\ z,\ m\in\mathbb{R},\ \mathring{\xi}\in L_0^2(\mathcal{F};\mathbb{R}),\ \theta,\ \zeta,\ Y,\ Y',\ \eta  \in L^2(\mathcal{F};\mathbb{R})$,
\begin{equation}\label{A13}
\begin{split}
(\widetilde{f}(s,\omega,y,z,m,\mathbb{P}_{(\mathring{\xi},\eta)})-\widetilde{f}(s,\omega,y',z,m,\mathbb{P}_{(\mathring{\xi},\eta)}))(y-y') \le \alpha_2{\vert y-y' \vert}^2,
\end{split}
\end{equation}
\begin{equation}\label{A14}
\begin{split}
E\big[(\widetilde{f}(s,\theta,\zeta,E[Y],\mathbb{P}_{(\mathring{\xi},\eta)})-\widetilde{f}(s,\theta,\zeta,E[Y'],\mathbb{P}_{(\mathring{\xi},\eta)}))(Y-Y')\big] \le \alpha_3E\big[{\vert Y-Y' \vert}^2\big].
\end{split}
\end{equation}
\begin{equation}\label{A15}
\begin{aligned}
&\text{Moreover,~} (y,m)\rightarrow\widetilde{f}(s,\omega,y,\eta,m,\mathbb{P}_{(\mathring{\xi},\eta)}) \text{~is uniformly continuous on compacts, uniformly ~}\\
 &\text{w.r.t.~} \mathbb{P}_{(\mathring{\xi},\eta)},~ds\mathbb{P}(d\omega)\text{-a.e.}
\end{aligned}
\end{equation}

\item [$\rm(A4)$]$E[|\xi|^{2}]<+\infty$, $|f(s,0,0,\delta_{\mathbf{0}})|\leq\alpha_4,~dsd\mathbb{P}$-a.e., where $\xi$ is $\mathcal{F}_{T}$-measurable, and $\delta_\mathbf{0}$ is the Dirac measure at $\mathbf{0}\in\mathbb{R}^2$.
\end{itemize}
\end{theorem}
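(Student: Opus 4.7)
The plan is to adapt the Darling--Pardoux approximation scheme of \cite{Pardoux97} to the mean-field setting. First I would establish uniqueness. For two solutions $(Y,Z)$ and $(Y',Z')$, applying It\^o's formula to $|Y_t-Y'_t|^2$ and rewriting the generator via the factorisation
\[
f(s,Y,Z,\mathbb{P}_{(Y,Z)})=\widetilde f\bigl(s,Y,Z,E[Y],\mathbb{P}_{(Y-E[Y],Z)}\bigr),
\]
I split the difference of generators into three pieces: the $y$-increment (controlled pointwise by $\alpha_2|Y-Y'|^2$ from \eqref{A13}), the $m$-increment (controlled in expectation by $\alpha_3 E[|Y-Y'|^2]$ from \eqref{A14}), and the $(z,\mu)$-increment (Lipschitz in $|Z-Z'|$ and $W_2$ with constant $\alpha_1$ from \eqref{A12}). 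Using the bound $W_2^2\bigl(\mathbb P_{(\mathring Y,Z)},\mathbb P_{(\mathring{Y'},Z')}\bigr)\le E\bigl[|\mathring Y-\mathring{Y'}|^2+|Z-Z'|^2\bigr]$ together with Young's inequality to absorb the $|Z-Z'|^2$ contributions into the quadratic variation term on the left, Gronwall's lemma closes the estimate.

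For existence, I would construct Lipschitz approximations $f_n$ by mollifying $\widetilde f$ in the variables $(y,m)$ against smooth, nonnegative, compactly supported kernels $\varphi_n$ on $\mathbb R^2$, combined if necessary with a truncation of the $(y,m)$-arguments, so that $f_n$ is globally Lipschitz in $(y,z,m,\mu)$ while still satisfying (A2)--(A3) with constants uniform in $n$. Nonnegativity of $\varphi_n$ preserves both the pointwise monotonicity in $y$ and the expectation monotonicity in $m$, and the Lipschitz constants in $z$ and in the Wasserstein distance transfer unchanged since $\widetilde f$ is already Lipschitz in those variables. The uniform continuity clause \eqref{A15} yields $f_n\to f$ pointwise. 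For each $n$, the mean-field BSDE with generator $f_n$ admits a unique solution $(Y^n,Z^n)\in\mathcal S^2(0,T;\mathbb R)\times L^2_{\mathbb F}(0,T;\mathbb R)$ by the classical Lipschitz theory \cite{ref15}; It\^o together with Burkholder--Davis--Gundy and \eqref{A11}, (A4) then produces $\sup_n\bigl(\|Y^n\|_{\mathcal S^2}^2+\|Z^n\|_{2,T}^2\bigr)<\infty$.

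To pass to the limit I apply It\^o's formula to $|Y^n-Y^{n'}|^2$ and combine the uniform-in-$n$ monotonicity of $\{f_n\}$ with the pointwise convergence $f_n\to f$ and the uniform $L^2$ bounds to show that $(Y^n,Z^n)$ is Cauchy in $\mathcal S^2\times L^2_{\mathbb F}$; the limit $(Y,Z)$ satisfies \eqref{eq 2.7} after a standard passage-to-the-limit using uniform integrability and the continuity of $f$ (which propagates through the Wasserstein argument via tightness of the laws of $(Y^n_s,Z^n_s)$). The step I expect to be the main obstacle is the construction of the mollification itself: since the first coordinate of the law enters $f$ twice --- as a Wasserstein-Lipschitz centered argument inside $\mu$ and as a monotone argument through the mean $m=E[Y]$ --- a single standard convolution does not simultaneously respect both roles without destroying either the monotonicity or the Lipschitz-in-measure structure. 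This disentangling is precisely the \emph{new} form of convolution function the authors advertise, and controlling the growth of the $n$-dependent Lipschitz constant of $f_n$ so that the a priori estimates and the Cauchy argument remain uniform in $n$ is the accompanying delicate point.
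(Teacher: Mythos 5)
Your uniqueness argument coincides with the paper's (they use an exponential weight $e^{\beta s}$ where you use Gronwall, which is immaterial), and you have correctly identified the crux of the existence proof: the mollification must act on the pair $(y,m)=(y,E[Y])$ while leaving the centered law $\mathbb{P}_{(\mathring{Y},\cdot)}$ untouched, which is exactly the convolution \eqref{juanji} the paper constructs, and which preserves both \eqref{A13} and \eqref{A14} with constants uniform in $n$ (their estimate \eqref{gndandiao}). Where you genuinely diverge is in the architecture of the existence proof: you mollify the full generator $f(s,y,z,\mathbb{P}_{(Y,Z)})$ in one stage, whereas the paper first freezes the $z$-argument and the second marginal of the law at a given process $V$ (Proposition \ref{111}), proves existence for that frozen equation by mollification in $(y,m)$ only, and then runs an outer Picard iteration $V=Z^{n}\mapsto Z^{n+1}$, shown to be a contraction via (A2). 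The paper's layering buys two things: the inner Cauchy estimate for the mollified sequence contains no $|Z^{n}-Z^{n'}|$ or Wasserstein-in-$Z$ terms to absorb, and the Lipschitz mean-field BSDE result of \cite{JMAA} (laws of $Y$ only) can be cited directly; your one-stage version needs a Lipschitz existence result for generators depending on the joint law of $(Y,Z)$ and must absorb the extra $z$- and measure-increments by Young's inequality in the Cauchy step --- routine, but it has to be done. Two smaller points: no truncation of $(y,m)$ is needed, since convolution with a compactly supported kernel plus the linear growth (A1) already yields a global (though $n$-dependent) Lipschitz constant; and \eqref{A15} is not merely what gives pointwise convergence $f_n\to f$ (continuity plus dominated convergence suffices for that) --- its real role is in the Cauchy argument, where the convergence must be uniform over the random arguments $(Y^m_s,E[Y^m_s])$ on the event that they stay in a compact set, quantified by the continuity modulus $\mathcal{X}_{M,C_{\ast}}$ together with a Chebyshev tail estimate in $M$; your appeal to ``uniform integrability and tightness'' gestures at this but is the one step of your sketch that would need to be made precise.
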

\begin{remark}\rm
Condition (A3)\eqref{A14} is satisfied in the following both important special cases:
\begin{itemize}
\item[$\rm i)$] That of $L^2$-continuity of $m\rightarrow\widetilde{f}(s,\omega,\theta,\zeta,m,\mathbb{P}_{(\mathring{\xi},\eta)})$, i.e.,
    $$
    E\big[|\widetilde{f}(s,\theta,\zeta,m,\mathbb{P}_{(\mathring{\xi},\eta)})-\widetilde{f}(s,\theta,\zeta,m',\mathbb{P}_{(\mathring{\xi},\eta)})|^2\big] \le \alpha_3^2| m-m'|^2,
    $$
    for all\ $s\in[0,T],\ m,\ m'\in\mathbb{R},\ \mathring{\xi}\in L_0^2(\mathcal{F};\mathbb{R}),\ \theta,\ \zeta,\ \eta  \in L^2(\mathcal{F};\mathbb{R})$;
\item[$\rm ii)$] And that, where $\widetilde{f}(s,\omega,y,z,m,\mathbb{P}_{(\mathring{\xi},\eta)})=\widetilde{f}_0(s,\omega,y,z,\mathbb{P}_{(\mathring{\xi},\eta)})+\widetilde{f}_1(s,m,\mathbb{P}_{(\mathring{\xi},\eta)})$, and $\widetilde{f}_1$ is such that
    $$
    (\widetilde{f}_1(s,m,\mathbb{P}_{(\mathring{\xi},\eta)})-\widetilde{f}_1(s,m',\mathbb{P}_{(\mathring{\xi},\eta)}))(m-m') \le \alpha_3| m-m'|^2,
    $$
    for all\ $s\in[0,T],\ \omega \in \Omega,\ y,\ z,\ m,\ m'\in\mathbb{R},\ \mathring{\xi}\in L_0^2(\mathcal{F};\mathbb{R}),\ \eta  \in L^2(\mathcal{F};\mathbb{R})$.
\end{itemize}
\end{remark}
\begin{proof}[Proof.]
\textit{Uniqueness.} Assume that $(Y^1,Z^1),\ (Y^2,Z^2)$ are both  solutions of BSDE (\ref{eq 2.7}), and we put $\widehat Y:=Y^1-Y^2,\ \widehat Z:=Z^1-Z^2$. By applying It\^{o}'s formula to $e^{\beta s}|\widehat{Y}_s|^2$, we have for any $\beta\in\mathbb{R}$,
\begin{equation*}
\begin{aligned}
&E[|\widehat{Y}_0|^2]+E\left[\int_0^T \beta e^{\beta s}|\widehat{Y}_s|^2ds\right]+E\left[\int_0^T  e^{\beta s}|\widehat{Z}_s|^2ds\right]\\
&=2E\left[\int_0^T  e^{\beta s}\widehat{Y}_s(f(s,Y^1_s,Z^1_s,\mathbb{P}_{(Y^1_s,Y^1_s)})-f(s,Y^2_s,Z^1_s,\mathbb{P}_{(Y^1_s,Z^1_s)}))
ds\right]\\
&\quad +2E\left[\int_0^T  e^{\beta s}\widehat{Y}_s(f(s,Y^2_s,Z^1_s,\mathbb{P}_{(Y^1_s,Z^1_s)})-f(s,Y^2_s,Z^2_s,\mathbb{P}_{(Y^2_s,Z^2_s)})) ds\right]\\
&= 2E\left[\int_0^T  e^{\beta s}\widehat{Y}_s(f(s,Y^1_s,Z^1_s,\mathbb{P}_{(Y^1_s,Y^1_s)})-f(s,Y^2_s,Z^1_s,\mathbb{P}_{(Y^1_s,Z^1_s)}))
ds\right]\\
&\quad +2E\left[\int_0^T  e^{\beta s}\widehat{Y}_s(\widetilde{f}(s,Y^2_s,Z^1_s,E[Y^1_s],\mathbb{P}_{(Y^1_s-E[Y^1_s],Z^1_s)})-\widetilde{f}(s,Y^2_s,Z^1_s,E[Y^2_s],\mathbb{P}_{(Y^1_s-E[Y^1_s],Z^1_s)})) ds\right]\\
&\quad +2E\left[\int_0^T  e^{\beta s}\widehat{Y}_s(\widetilde{f}(s,Y^2_s,Z^1_s,E[Y^2_s],\mathbb{P}_{(Y^1_s-E[Y^1_s],Z^1_s)})-\widetilde{f}(s,Y^2_s,Z^2_s,E[Y^2_s],\mathbb{P}_{(Y^2_s-E[Y^2_s],Z^2_s)})) ds\right].
\end{aligned}
\end{equation*}
Thanks to the assumptions (A2) and (A3), we obtain
\begin{equation*}
\begin{aligned}
&E\left[\int_0^T \beta e^{\beta s}|\widehat{Y}_s|^2ds\right]+E\left[\int_0^T  e^{\beta s}|\widehat{Z}_s|^2ds\right]\\
&\le 2\alpha_2 E\left[\int_0^T  e^{\beta s}|\widehat{Y}_s|^2ds\right]+2\alpha_3 E\left[\int_0^T  e^{\beta s}|\widehat{Y}_s|^2ds\right]\\
& \quad + 2\alpha_1 E\left[\int_0^T  e^{\beta s}|\widehat{Y}_s|\Big(|\widehat{Z}_s|+W_2\big(\mathbb{P}_{(Y^1_s-E[Y^1_s],Z^1_s)},\mathbb{P}_{(Y^2_s-E[Y^2_s],Z^2_s)}\big)\Big)ds\right]\\
&\leq (2\alpha_2+2\alpha_3) E\left[\int_0^T  e^{\beta s}|\widehat{Y}_s|^2ds\right]+2\alpha_1 E\left[\int_0^T  e^{\beta s}\widehat{Y}_s\Big(|\widehat{Z}_s|+\big(E\big[\big|\widehat{Y}_s-E[\widehat{Y}_s]\big|^2+|\widehat Z_s|^2\big]\big)^{\frac{1}{2}}\Big) ds\right]\\
&\leq (2\alpha_2+2\alpha_3+8\alpha_1^2+\frac{1}{2}) E\left[\int_0^T  e^{\beta s}|\widehat{Y}_s|^2ds\right]+\frac{1}{2} E\left[\int_0^T  e^{\beta s}|\widehat Z_s|^2 ds\right].
\end{aligned}
\end{equation*}
Taking $\beta=2\alpha_2+2\alpha_3+8\alpha_1^2+1$, we have
$$
E\left[\int_0^T e^{\beta s}|\widehat{Y}_s|^2ds\right]+E\left[\int_0^T  e^{\beta s}|\widehat{Z}_s|^2ds\right]\leq 0,
$$
therefore $Y_t^1=Y_t^2$, $Z_t^1=Z_t^2$, $dtd\mathbb{P}$-a.s.

\textit{Existence.}
Let us admit for a moment the following proposition.

\begin{proposition}\label{111}
Let $V\in L_\mathbb{F}^2(0,T;\mathbb{R})$  be any given stochastic process, then there exists a unique pair $(Y,Z)\in \mathcal{S}^2(0,T;\mathbb{R})\times L_\mathbb{F}^2(0,T;\mathbb{R})$ satisfying
\begin{equation}\label{MFBSDEV}Y_t=\xi+\int_t^Tf(s,Y_s,V_s,\mathbb{P}_{(Y_s,V_s)})ds-\int_t^TZ_sdW_s,\ \ 0\le t\le T.\end{equation}
\end{proposition}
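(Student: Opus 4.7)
My plan is to follow the Darling and Pardoux scheme \cite{Pardoux97} singled out in the introduction, adapted to the mean-field setting. With $V$ fixed, equation \eqref{MFBSDEV} is a mean-field BSDE in the unknown $(Y,Z)$ whose generator $g(s,\omega,y,\mu):=f(s,\omega,y,V_s(\omega),\mu)$ is continuous and monotone in $y$ by (A3), of linear growth in $y$ by (A1), Wasserstein-Lipschitz in the centred part of $\mu$ by (A2), and satisfies the mean-monotonicity \eqref{A14}. Together with (A4) and $V\in L^2_\mathbb{F}(0,T;\mathbb{R})$, this gives $g(\cdot,0,\delta_{\mathbf 0})\in L^2_\mathbb{F}(0,T;\mathbb{R})$.

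First I would construct a sequence of approximations $g_n$ that are Lipschitz in $(y,\mu)$ uniformly in $(s,\omega)$, converge to $g$ locally uniformly, and retain the structural constants $\alpha_1,\alpha_2,\alpha_3$. A Yosida/inf-convolution regularisation in $y$ handles the monotone part in $y$; the $\mu$-dependence is regularised via the decomposition through $\widetilde f$, with the Wasserstein-Lipschitz centred part approximated by a standard mollification on $\mathscr{P}_2(\mathbb{R}^2)$ and the mean-monotone part by the ``new form of convolution'' announced in the introduction, designed so that the monotonicity \eqref{A14} survives the regularisation. Since each $g_n$ is globally Lipschitz in $(y,\mu)$, the classical mean-field BSDE result of \cite{ref11} yields a unique pair $(y^n,z^n)\in\mathcal{S}^2(0,T;\mathbb{R})\times L^2_\mathbb{F}(0,T;\mathbb{R})$ solving the regularised equation.

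Applying It\^{o}'s formula to $e^{\beta s}|y^n_s|^2$ together with the Burkholder--Davis--Gundy inequality gives a uniform a priori bound $\sup_n\bigl(\|y^n\|_{\mathcal{S}^2}^2+\|z^n\|_{L^2,T}^2\bigr)\le C$ depending only on $T$, $E[|\xi|^2]$, $\|V\|_{2,T}$ and the structural constants, provided $\beta$ dominates these constants. For the Cauchy property I would apply It\^{o} to $e^{\beta s}|y^n_s-y^m_s|^2$ and replicate the uniqueness calculation of Theorem \ref{le2.4}, splitting $g_n(s,y^n_s,\mathbb{P}_{(y^n_s,V_s)})-g_m(s,y^m_s,\mathbb{P}_{(y^m_s,V_s)})$ into a $y$-part (absorbed through $\alpha_2$), a centred-law part (dominated by $\alpha_1$ and the Wasserstein bound $W_2^2\le E|y^n-y^m|^2$), a mean part (handled by \eqref{A14} with $Y=y^n$, $Y'=y^m$), and an approximation-error term that vanishes thanks to the uniform a priori bound together with the locally uniform convergence $g_n\to g$. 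For $\beta$ large enough this yields convergence in $\mathcal{S}^2\times L^2_\mathbb{F}$; passing to the limit in the equation via the continuity of $g$ (A3\eqref{A15} and A2) and uniform integrability furnishes a solution $(Y,Z)$. Uniqueness is the special case $Z^1=Z^2=V$ of the It\^{o} calculation already displayed in the uniqueness part of Theorem \ref{le2.4}.

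I expect the main obstacle to be the construction of the approximating generators $g_n$: the mean-monotonicity \eqref{A14} and the Wasserstein-Lipschitz property \eqref{A12} are structurally different, so standard mollifiers on $\mathscr{P}_2(\mathbb{R}^2)$ do not preserve \eqref{A14}. A bespoke convolution respecting the decomposition through $\widetilde f$ must be designed, which is precisely the technical novelty the introduction advertises. Once $g_n$ is in hand with the correct constants, the remaining steps form a standard, if technical, Darling--Pardoux-style passage to the limit.
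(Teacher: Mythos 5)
Your overall architecture (regularise the generator to a Lipschitz one, solve the approximating mean-field BSDEs, get a uniform a priori bound, prove the Cauchy property by splitting the increment into a monotone part and an approximation error, pass to the limit, and obtain uniqueness from the It\^{o} computation of Theorem \ref{le2.4} with $Z^1=Z^2=V$) coincides with the paper's. But the step you defer --- the actual construction of the approximating generators --- is not a peripheral technicality: it is the entire content of the proof of Proposition \ref{111}, carried out in Lemma \ref{lemmajuanji}, and your proposal does not supply it. Writing ``a bespoke convolution \ldots must be designed'' and identifying it as the advertised novelty is a correct diagnosis of where the difficulty lies, but it leaves the proof with a genuine gap: without an explicit $g^n$ that is (a) globally Lipschitz in $(y,\mathbb{P}_Y)$ for each $n$, (b) of linear growth uniformly in $n$, (c) pointwise convergent to $g$, and (d) satisfying the one-sided bound \eqref{gndandiao} with a constant \emph{independent of} $n$, none of the subsequent steps can be executed. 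Moreover, two of the ingredients you do propose point in the wrong direction: the centred-law dependence $\mathbb{P}_{\mathring{Y}}$ needs no regularisation at all, since (A2) already makes $\widetilde f$ Wasserstein--Lipschitz in that argument (so ``a standard mollification on $\mathscr{P}_2(\mathbb{R}^2)$'' is solving a non-problem), and a Yosida/inf-convolution in $y$ alone does not touch the mean-monotone dependence \eqref{A14}, which is where you yourself concede standard tools fail.

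The paper's resolution is simpler than what you anticipate, and worth recording: after the decomposition $\mu\mapsto(E[Y],\mathbb{P}_{\mathring{Y}})$ via $\widetilde f$, the only non-Lipschitz arguments are the two \emph{scalars} $y$ and $m=E[Y]$, so one takes an ordinary mollifier $\rho\in C^1_c(\mathbb{R}^2)$ and sets
$$
g^n(s,y,\mathbb{P}_{(Y,V_s)}):=\int_{\mathbb{R}^2}n^2\,g\big(s,y-y',E[Y]-y'',\mathbb{P}_{(\mathring{Y},V_s)}\big)\rho(ny',ny'')\,dy'dy''.
$$
This $g^n$ is an average of translates of $g$ in $(y,m)$; since conditions \eqref{A13} and \eqref{A14} are invariant under the translation $Y\mapsto Y-y''$ (which shifts $E[Y]$ by $y''$ and leaves $Y-Y'$ unchanged), each translate satisfies them with the same constants, and the convex average \eqref{juanji} therefore satisfies \eqref{gndandiao} uniformly in $n$ --- this is the point your Yosida proposal does not deliver. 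The Lipschitz property in $(y,E[Y])$ then comes from differentiating $\rho$, the Lipschitz property in $\mathbb{P}_{\mathring{Y}}$ directly from (A2), and convergence from (A3)\eqref{A15}. One further detail your sketch glosses over: the approximation error $\bar g^{\,n}=g^n-g$ in the Cauchy estimate is controlled not by ``locally uniform convergence'' alone but by the continuity modulus $\mathcal{X}_{M,C_*}$ on the event $\{\sup_s|Y^m_s|\le M\}$ combined with a Chebyshev truncation for the complement, followed by $M\to\infty$; this is needed precisely because the convergence $g^n\to g$ is only uniform on compacts and the $Y^m$ are not uniformly bounded.
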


Using Proposition \ref{111}, we can construct a sequence $(Y^n,Z^n)$ as follows: $(Y^0,Z^0):=(0,0)$, and, for $n\geq 1$,
$$
Y^{n+1}_t=\xi+\int_t^Tf(s,Y^{n+1}_s,Z^n_s,\mathbb{P}_{(Y^{n+1}_s,Z^{n}_s)})ds-\int_t^TZ^{n+1}_sdW_s,\ \ 0\le t\le T.
$$
Let $\Delta Y^n\equiv Y^{n+1}-Y^n$, and $\Delta Z^n\equiv Z^{n+1}-Z^n$. By applying It\^{o}'s formula to $e^{\theta t}|\Delta Y^n_t|^2$, we get for any $t\in [0,T],\ \theta\in \mathbb{R}$,
\begin{equation}
\begin{aligned}
&E\big[e^{\theta t}|\Delta Y^n_t|^2+\int_t^Te^{\theta s}\big(\theta |\Delta Y^n_s|^2+|\Delta Z^n_s|^2\big)ds\big]\\
&=2E\Big[\int_t^Te^{\theta s}\Delta Y^n_s\big(f(s,Y^{n+1}_s,Z^n_s,\mathbb{P}_{(Y^{n+1}_s,Z^{n}_s)})-f(s,Y^{n}_s,Z^{n-1}_s,\mathbb{P}_{(Y^{n}_s,Z^{n-1}_s)})\big)ds\Big]\\
&=2E\Big[\int_t^Te^{\theta s}\Delta Y^n_s\big(f(s,Y^{n+1}_s,Z^n_s,\mathbb{P}_{(Y^{n+1}_s,Z^{n}_s)})-f(s,Y^{n}_s,Z^{n}_s,\mathbb{P}_{(Y^{n+1}_s,Z^{n}_s)})\big)ds\Big]\\
&\quad +2E\Big[\int_t^Te^{\theta s}\Delta Y^n_s\Big(\widetilde{f}(s,Y^{n}_s,Z^n_s,E[Y^{n+1}_s],\mathbb{P}_{(Y^{n+1}_s-E[Y^{n+1}_s],Z^{n}_s)})\\
&\text{ } \hspace{3.7cm}-\widetilde{f}(s,Y^{n}_s,Z^n_s,E[Y^{n}_s],\mathbb{P}_{(Y^{n+1}_s-E[Y^{n+1}_s],Z^{n}_s)})\Big)ds\Big]\\
&\quad +2E\Big[\int_t^Te^{\theta s}\Delta Y^n_s\Big(\widetilde{f}(s,Y^{n}_s,Z^n_s,E[Y^{n}_s],\mathbb{P}_{(Y^{n+1}_s-E[Y^{n+1}_s],Z^{n}_s)})\\
&\text{ } \hspace{3.7cm}-\widetilde{f}(s,Y^{n}_s,Z^{n-1}_s,E[Y^n_s],\mathbb{P}_{(Y^{n}_s-E[Y^n_s],Z^{n-1}_s)})\Big)ds\Big]
\end{aligned}
\end{equation}
\begin{equation}
\begin{aligned}
&\leq (2\alpha_2+2\alpha_3)E\Big[\int_t^Te^{\theta s}|\Delta Y^n_s|^2ds\Big]\\
&\quad +2\alpha_1 E\Big[\int_t^Te^{\theta s}|\Delta Y^n_s|\Big(|\Delta Z^{n-1}_s|+W_2\big(\mathbb{P}_{(Y^{n+1}_s-E[Y^{n+1}_s],Z^{n}_s)}, \mathbb{P}_{(Y^{n}_s-E[Y^n_s],Z^{n-1}_s)}\big)\Big)ds\Big]\\
&\leq (2\alpha_2+2\alpha_3)E\Big[\int_t^Te^{\theta s}|\Delta Y^n_s|^2ds\Big]\\
&\quad +2\alpha_1 E\Big[\int_t^Te^{\theta s}|\Delta Y^n_s|\Big(|\Delta Z^{n-1}_s|+\big(E\big[\big|\Delta Y^{n}_s-E[\Delta Y^{n}_s]\big|^2+| \Delta Z^{n-1}_s|^2\big]\big)^{\frac{1}{2}}\Big)ds\Big]\\
&\leq (2\alpha_2+2\alpha_3+c\alpha_1^2+\frac{4}{c} )E\Big[\int_t^Te^{\theta s}|\Delta Y^n_s|^2ds\Big]+\frac{4}{c}E\Big[\int_t^Te^{\theta s}|\Delta Z^{n-1}_s|^2ds\Big],
\end{aligned}
\end{equation}
for any $c>0$, where the first inequality comes from assumptions (A2) and (A3). Choosing $c=8,\ \theta=2\alpha_2+2\alpha_3+8\alpha_1^2+\frac{1}{2}$, we deduce
\begin{equation}
E\big[\int_t^Te^{\theta s}|\Delta Z^n_s|^2ds\big]\leq \frac{1}{2}E\Big[\int_t^Te^{\theta s}|\Delta Z^{n-1}_s|^2ds\Big].
\end{equation}
Hence, $\{Z^n\}_{n\geq 1}$ is a Cauchy sequence in $L^2_{\mathbb{F}}(0,T;\mathbb{R})$, and tends to a limit $Z$. Choosing $c=4,\ \theta=2\alpha_2+2\alpha_3+5\alpha_1^2+1$, we also  get
\begin{equation}
E\big[\int_t^Te^{\theta s}|\Delta Y^n_s|^2ds\big]\leq \frac{1}{\alpha_1^2}E\Big[\int_t^Te^{\theta s}|\Delta Z^{n-1}_s|^2ds\Big]\rightarrow0,\ n\rightarrow\infty,
\end{equation}
and so also $\{Y^n\}_{n\geq 1}$ is a Cauchy sequence in $L^2_{\mathbb{F}}(0,T;\mathbb{R})$, and, hence, it has a limit $Y$. Following the standard BSDEs theory, we show $(Y,Z)$ solves BSDE \eqref{eq 2.7}.
\end{proof}
\begin{lemma}\label{lemmajuanji}
Let $f(\omega,t,y,z,\mu):\Omega\times[0,T]\times\mathbb{R}\times\mathbb{R}\times\mathscr{P}_2(\mathbb{R}^2)\rightarrow{\mathbb{R}}$ satisfy the assumptions (A1)-(A4). For any fixed $V\in L^2_{\mathbb{F}}(0,T;\mathbb{R})$, we define $g:[0,T]\times \Omega\times \mathbb{R}\times \mathbb{R}\times\mathcal{P}_2(\mathbb{R}^2)\rightarrow \mathbb{R}$ as $g(s,y,m,\mathbb{P}_{(\mathring{Y},V_s)}):=\widetilde{f}(s,y,V_s,m,\mathbb{P}_{(\mathring{Y},V_s)})=f(s,y,V_s,\mathbb{P}_{(\mathring{Y}+m,V_s)})$, where $\mathring{Y}=Y-E[Y]$, $Y\in L^2(\Omega,\mathcal{F}_T,\mathbb{P})$.  Let $\rho(y',y'')\in C^1_c(\mathbb{R}^2)$ be such that its compact support satisfies $supp(\rho)\subset[0,1]^2$, $\rho\geq0$, and $\int_{\mathbb{R}^2}\rho(u)du=1$.   Then the sequence of functions $g_n:[0,T]\times \Omega\times \mathbb{R}\times \mathcal{P}_2(\mathbb{R}^2)\rightarrow \mathbb{R}$,
\begin{equation}\label{juanji}
g^n(s,y,\mathbb{P}_{(Y,V_s)}):=\int_{\mathbb{R}^2}n^2g(s,y-y',E[Y]-y'',\mathbb{P}_{( \mathring{Y},V_s)})\rho(ny',ny'')dy'dy'',
\end{equation}
is well defined, and has the following properties:

$(\rm i)$ Lipschitz condition: For all $s\in[0,T],\ y,~\hat{y}\in\mathbb{R},\ Y,~\hat{Y}\in L^2(\mathcal{F})$,
$$
|g^n(s,y,\mathbb{P}_{(Y,V_s)})-g^n(s,\hat{y},\mathbb{P}_{(\hat{Y},V_s)})|\leq C_n \Big(|y-\hat{y}|+W_2(\mathbb{P}_Y,\mathbb{P}_{\hat{Y}})\Big),\quad \mathbb{P}\text{-a.s.,}
$$
where $C_n>0$ is a constant depending on $n$.

$(\rm ii)$ There exists a constant $C_{K,\alpha_4}>0$ such that, for all $s\in[0,T],\ y\in\mathbb{R},\ Y\in L^2(\mathcal{F})$,
$$ |g^n(s,y,\mathbb{P}_{(Y,V_s)})|\leq C_{K,\alpha_4}(1+|y|+|V_s|+(E[|Y|^2])^{\frac{1}{2}}+(E[|V_s|^2])^{\frac{1}{2}}),\quad \mathbb{P}\text{-a.s.}$$


$(\rm iii)$ Convergence: Let ~$y\in\mathbb{R}$ and ~$Y \in L^2(\mathcal{F})$, then $g^n(s,y,\mathbb{P}_{(Y,V_s)})\rightarrow g(s,y,E[Y],\mathbb{P}_{(\mathring{Y},V_s)})=f(s,y,V_s,\mathbb{P}_{(Y,V_s)})$, as $n\rightarrow \infty$, for all $s\in[0,T]$.
\end{lemma}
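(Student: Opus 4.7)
Plan: The mollifier in \eqref{juanji} is a convolution in the two scalar arguments $y$ and $m=E[Y]$ only; the law argument $\mathbb{P}_{(\mathring Y,V_s)}$ is left completely untouched. Two equivalent forms are convenient. The change of variables $u=ny'$, $v=ny''$ produces the ``shifted-argument'' form
\begin{equation*}
g^n(s,y,\mathbb{P}_{(Y,V_s)})=\int_{[0,1]^2}g\bigl(s,y-\tfrac{u}{n},E[Y]-\tfrac{v}{n},\mathbb{P}_{(\mathring Y,V_s)}\bigr)\rho(u,v)\,du\,dv,
\end{equation*}
which is adapted to the growth bound and to the pointwise convergence, while the change $a=y-y'$, $b=E[Y]-y''$ produces the ``shifted-mollifier'' form
\begin{equation*}
g^n(s,y,\mathbb{P}_{(Y,V_s)})=\int_{\mathbb{R}^2}g(s,a,b,\mathbb{P}_{(\mathring Y,V_s)})\,\tilde\phi_n(y-a,E[Y]-b)\,da\,db,\quad \tilde\phi_n(\alpha,\beta):=n^2\rho(n\alpha,n\beta),
\end{equation*}
which is adapted to the Lipschitz bound, because $\tilde\phi_n\in C^1_c(\mathbb{R}^2)$ allows differentiation under the integral sign.

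For the growth bound (ii), substitute the identity $g(s,\cdot,\cdot,\mathbb{P}_{(\mathring Y,V_s)})=f(s,\cdot,V_s,\mathbb{P}_{(\mathring Y+\cdot,V_s)})$ into the shifted-argument form and apply (A1) pointwise in $(u,v)\in[0,1]^2$. The terms $u/n,v/n$ are bounded by $1$, the quantity $(E[|\mathring Y+E[Y]-v/n|^2])^{1/2}$ is controlled by $(E[|Y|^2])^{1/2}+1$, and the inhomogeneous term $|f(s,0,V_s,\mathbb{P}_{(0,V_s)})|$ is dominated, via (A2) applied to $\widetilde f$ in $(z,\mathbb{P})$ together with (A4), by $\alpha_4+\alpha_1(|V_s|+(E[|V_s|^2])^{1/2})$. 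Integration against $\rho$ (which has total mass $1$) then yields the desired estimate with a universal constant $C_{K,\alpha_4}$.

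For the Lipschitz bound (i), differentiate the shifted-mollifier form under the integral in $y$ and in the scalar $E[Y]$; since $\|\nabla\tilde\phi_n\|_\infty=O(n^3)$ and the integrand is supported on a rectangle of area $n^{-2}$ on which the $|g|$-bound from the previous paragraph applies, both partial derivatives are of order $n$, giving Lipschitz dependence in $y$ and in $E[Y]$. The remaining dependence on $\mathbb{P}_{\mathring Y}$ is inherited directly from $g$ and is $\alpha_1$-Lipschitz by (A2), uniformly in $(a,b)$, and this is preserved after integrating against the probability density $\tilde\phi_n$. The three contributions are then combined into a single constant $C_n$ using the elementary Wasserstein inequalities $|E[Y]-E[\hat Y]|\le W_2(\mathbb{P}_Y,\mathbb{P}_{\hat Y})$ and $W_2(\mathbb{P}_{\mathring Y},\mathbb{P}_{\mathring{\hat Y}})\le 2W_2(\mathbb{P}_Y,\mathbb{P}_{\hat Y})$, which both follow by inserting the centring into any optimal coupling of $(Y,\hat Y)$.

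Part (iii) is then a direct application of dominated convergence in the shifted-argument form: the continuity of $\widetilde f$ in $(y,m)$ asserted in (A3) gives pointwise convergence of the integrand to $g(s,y,E[Y],\mathbb{P}_{(\mathring Y,V_s)})$, and the bound of (ii) supplies an integrable majorant independent of $n$; unwinding definitions identifies the limit with $\widetilde f(s,y,V_s,E[Y],\mathbb{P}_{(\mathring Y,V_s)})=f(s,y,V_s,\mathbb{P}_{(Y,V_s)})$. The main obstacle is step (i): the Lipschitz estimate must absorb two qualitatively different modes of dependence on $(y,\mathbb{P}_Y)$---the scalar arguments $y$ and $E[Y]$ that are Lipschitzified by the mollifier at the price of an $n$-factor, and the centred law $\mathbb{P}_{\mathring Y}$ which is not mollified at all and must be handled by (A2)---and then repackage them into a single Lipschitz bound controlled by $|y-\hat y|+W_2(\mathbb{P}_Y,\mathbb{P}_{\hat Y})$.
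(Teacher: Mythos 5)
Your proposal is correct and follows essentially the same route as the paper: the ``shifted-mollifier'' form with differentiation under the integral for the $y$- and $E[Y]$-Lipschitz bounds combined with (A2) for the untouched law argument in (i), the pointwise application of (A1) plus (A2)/(A4) for the inhomogeneous term in (ii), and dominated convergence in the ``shifted-argument'' form for (iii). The only cosmetic difference is that the paper handles the $E[Y]$-dependence through the difference of the two mollifier kernels rather than an explicit partial derivative in $m$, and passes from $(E[|Y-\hat Y|^2])^{1/2}$ to $W_2(\mathbb{P}_Y,\mathbb{P}_{\hat Y})$ by the arbitrariness of the coupling, which is equivalent to your explicit Wasserstein inequalities.
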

\begin{proof} (i) Since
\begin{equation*}
\begin{aligned}
&g^n(s,y,\mathbb{P}_{(Y,V_s)})=\int_{\mathbb{R}^2}n^2g(s,y-y',E[Y]-y'',\mathbb{P}_{( \mathring{Y},V_s)})\rho(ny',ny'')dy'dy''\\
=&\int_{\mathbb{R}^2}n^2g(s,x',x'',\mathbb{P}_{( \mathring{Y},V_s)})\rho\big(n(y-x'),n(E[Y]-x'')\big)dx'dx'',
\end{aligned}
\end{equation*}
it follows from the assumptions on $\rho$ and (A3)\eqref{A15}, we have
\begin{equation}\label{gny}
|\partial_y g^n(s,y,\mathbb{P}_{(Y,V_s)})|\leq C_n \int_{\mathbb{R}^2}|g(s,x',x'',\mathbb{P}_{( \mathring{Y},V_s)})|dx'dx''<\infty.
\end{equation}
On the other hand, for all $Y,~\hat{Y}\in L^2(\mathcal{F})$, by using our assumptions on $g$ and $\rho$, we obtain
\begin{equation}\label{gnY}
\begin{aligned}
&|g^n(s,y,\mathbb{P}_{(Y,V_s)})-g^n(s,y,\mathbb{P}_{(\hat{Y},V_s)})|\\
=&\Big|\int_{\mathbb{R}^2}n^2g(s,x',x'',\mathbb{P}_{( \mathring{Y},V_s)})\rho\big(n(y-x'),n(E[Y]-x'')\big)dx'dx''\\
&-\int_{\mathbb{R}^2}n^2g(s,x',x'',\mathbb{P}_{( \mathring{\hat{Y}},V_s)})\rho\big(n(y-x'),n(E[\hat{Y}]-x'')\big)dx'dx''\Big|\\
=&\Big|\int_{\mathbb{R}^2}n^2\big(g(s,x',x'',\mathbb{P}_{( \mathring{Y},V_s)})-g(s,x',x'',\mathbb{P}_{( \mathring{\hat{Y}},V_s)})\big)\rho\big(n(y-x'),n(E[Y]-x'')\big)dx'dx''\\
&+\int_{\mathbb{R}^2}n^2g(s,x',x'',\mathbb{P}_{( \mathring{\hat{Y}},V_s)})\big[\rho\big(n(y-x'),n(E[Y]-x'')\big)-\rho\big(n(y-x'),n(E[\hat{Y}]-x'')\big)\big]dx'dx''\Big|\\
\leq&CW_2(\mathbb{P}_{( \mathring{Y},V_s)},\mathbb{P}_{( \mathring{\hat{Y}},V_s)})+C_n|E[Y-\hat{Y}]|\leq C_n(E[|Y-\hat{Y}|^2])^{\frac{1}{2}}.
\end{aligned}
\end{equation}
By combining \eqref{gny}, \eqref{gnY} and taking into account the arbitrariness of $Y$ and $\hat{Y}$, we conclude
$$
|g^n(s,y,\mathbb{P}_{(Y,V_s)})-g^n(s,\hat{y},\mathbb{P}_{(\hat{Y},V_s)})|\leq C_n \Big(|y-\hat{y}|+W_2(\mathbb{P}_Y,\mathbb{P}_{\hat{Y}})\Big),\quad \mathbb{P}\text{-a.s.}
$$


(ii) From the definition of $g$, $g^n$ and the linear growth condition satisfied by $f$, we have immediately, for all $(s, y, Y) \in[0, T] \times \mathbb{R} \times L^2(\mathcal{F})$,
$$
\begin{aligned}
|g^n(s, y, \mathbb{P}_{(Y, V_s)})|& \leq K(1+|y|+(E[|Y|^2])^{\frac{1}{2}})+K\int_{\mathbb{R}^2}n^2(|y'|+|y''|)\rho(ny',ny'')dy'dy''\\
&~~~+|f(s,0,V_s,\mathbb{P}_{(0,V_s)})|\\
&\leq C_K(1+|y|+|V_s|+(E[|Y|^2])^{\frac{1}{2}}+(E[|V_s|^2])^{\frac{1}{2}})+\alpha_4,
\end{aligned}
$$
where $C_K>0$ is a constant depending on $K$.  This implies (ii).

(iii) Let  $(s, y, Y) \in[0, T] \times \mathbb{R} \times L^2(\mathcal{F})$. Thanks to the definition of $g^n$ and $g$, we have
$$
\begin{aligned}
&|g(s,y,E[Y],\mathbb{P}_{(\mathring{Y},V_s)})-g^n(s,y,\mathbb{P}_{(Y,V_s)})|\\
\leq&\int_{\mathbb{R}^2}n^2\big|g(s,y,E[Y],\mathbb{P}_{(\mathring{Y},V_s)})-g(s,y-y',E[Y]-y'',\mathbb{P}_{( \mathring{Y},V_s)})\big|\rho(ny',ny'')dy'dy''\\
=&\int_{\mathbb{R}^2}\big|g(s,y,E[Y],\mathbb{P}_{(\mathring{Y},V_s)})-g(s,y-\frac{y'}{n},E[Y]-\frac{y''}{n},\mathbb{P}_{( \mathring{Y},V_s)})\big|\rho(y',y'')dy'dy'',
\end{aligned}
$$
which  converges to $0$ by the continuity of $g$ and the dominated convergence theorem, as $n\rightarrow\infty$.
The proof is complete.
\end{proof}
\begin{remark}
For any given $V,\ V'\in L^2_{\mathbb{F}}(0,T;\mathbb{R})$, 
we also have the following property:
$$
|g^n(s,y,\mathbb{P}_{(Y,V_s)})-g^n(s,y,\mathbb{P}_{(Y,V'_s)})|\leq CW_2(\mathbb{P}_{(Y,V_s)},\mathbb{P}_{(Y,V'_s)}),
$$
for all $s\in[0,T],\ y\in\mathbb{R},\ Y\in L^2(\mathcal{F})$.
\end{remark}

\begin{proof}[\textbf{Proof of Proposition \ref{111}}] The uniqueness is proved similar to the uniqueness proof of Theorem \ref{le2.4}. Let us prove the existence. For this, we  write $$g(s,y,E[Y],\mathbb{P}_{(\mathring{Y},V_s)}):=\widetilde{f}(s,y,V_s,E[Y],\mathbb{P}_{(\mathring{Y},V_s)})=f(s,y,V_s,\mathbb{P}_{(Y,V_s)}),$$ for all $y\in \mathbb{R},~Y\in L^2(\mathcal{F})$, where $\mathring{Y}:=Y-E[Y]$. We first approximate $g$ by  $g^n$, where $g^n$ is defined in \eqref{juanji}. Our assumptions (A2) and (A3) imply that
\begin{equation}\label{gndandiao}
E[(g^n(s,Y_1,\mathbb{P}_{(Y_1,V_s)})-g^n(s,Y_2,\mathbb{P}_{(Y_2,V_s)}))(Y_1-Y_2)]\leq CE[|Y_1-Y_2|^2],
\end{equation}
where the constant $C$ does not depend on $n$. Then, thanks to Lemma \ref{lemmajuanji} $(\rm i)$,   $(\rm ii)$, and by using an  argument similar to that of Theorem 2.1 in \cite{JMAA}, we get that the mean-field BSDE
\begin{equation}\label{mfbsden}
Y^n_t=\xi+\int_t^T g^n\left(s, Y^n_s, \mathbb{P}_{(Y^n_s,V_s)}\right) d s-\int_t^T Z^n_s d W_s
\end{equation}
has a unique solution $(Y^n,\ Z^n)\in \mathcal{S}^2(0,T;\mathbb{R})\times L^2_{\mathbb{F}}(0,T;\mathbb{R})$. Moreover,
$$
\left|Y^n_t\right|^2+\int_t^T|Z^n_s|^2 d s =|\xi|^2+2 \int_t^T Y^n_s \cdot g^n\left(s, Y^n_s, \mathbb{P}_{(Y^n_s,V_s)}\right) d s-2 \int_t^T Y^n_s \cdot Z^n_s d W_s,$$
and
\begin{equation*}
E\left[\left|Y^n_t\right|^2+\int_t^T|Z^n_s|^2 d s\right] \leq E\left[|\xi|^2\right]+C E\left[\int_t^T\left(1+\left|Y^n_s\right|^2\right) d s\right].
\end{equation*}
From these both relations above and Lemma \ref{lemmajuanji} (ii) it follows by standard estimates that
\begin{equation}\label{BSDEgn}
\sup _n E\Big[\sup_{t\in[0,T]} |Y^n_t|^2+\int_0^T|Z^n_s|^2 d s\Big]<\infty.
\end{equation}
For $n,~m\geq1$, we put $\bar{Y}^{n, m}:=Y^n-Y^m$, $\bar{Z}^{n, m}:=Z^n-Z^m$;  $\bar{g}^{n, m}:=g^n-g^m$, $\bar{g}^n:=g^n-g$.  From \eqref{mfbsden}, by applying It\^{o} formula, we get
$$
\begin{aligned}
& E\left[|\bar{Y}_t^{n, m}|^2+\int_t^T|\bar{Z}_s^{n, m}|^2 d s\right] \\
& =2 E\left[\int_t^T\bar{Y}_s^{n, m}(g^n(s, Y^n_s, \mathbb{P}_{(Y^n_s,V_s)})-g^m(s, Y^m_s, \mathbb{P}_{(Y^m_s,V_s)})) d s\right] \\
& =2 E\left[\int_t^T\bar{Y}_s^{n, m}(g^n(s, Y^n_s, \mathbb{P}_{(Y^n_s,V_s)})-g^n(s, Y^m_s, \mathbb{P}_{(Y^m_s,V_s)})) d s\right] \\
 &~~~~+2 E\left[\int_t^{T}\bar{Y}_s^{n, m} \bar{g}^{n, m}(s, Y_s^m, \mathbb{P}_{(Y^m_s,V_s)})) d s\right] \\
& =I_t^{n, m}+J_t^{n, m},~ t \in[0, T],
\end{aligned}
$$
where $$I_t^{n, m}:=2 E\left[\int_t^T\bar{Y}_s^{n, m}(g^n(s, Y^n_s, \mathbb{P}_{(Y^n_s,V_s)})-g^n(s, Y^m_s, \mathbb{P}_{(Y^m_s,V_s)})) d s\right],~t \in[0, T]$$and$$ J_t^{n, m}:=2 E\left[\int_t^{T}\bar{Y}_s^{n, m} \bar{g}^{n, m}(s, Y_s^m, \mathbb{P}_{(Y^m_s,V_s)}) d s\right],~t \in[0, T].$$
Thanks to \eqref{gndandiao}, $I_t^{n, m} \leq 2 E\left[\displaystyle\int_t^T\left|\bar{Y}_s^{n, m}\right|^2 d s\right], ~t \in[0, T]$.
Moreover, by $\bar{g}^{n, m}=\bar{g}^n-\bar{g}^m$, we obtain
$$
\begin{aligned}
J_t^{n, m}= & 2 E\left[\int_t^T\bar{Y}_s^{n, m} \bar{g}^n(s, Y_s^m, \mathbb{P}_{(Y^m_s,V_s)}) d s\right]  -2 E\left[\int_t^T\bar{Y}_s^{n, m} \bar{g}^m(s, Y_s^m, \mathbb{P}_{(Y^m_s,V_s)}) d s\right].
\end{aligned}
$$
Observe that, for all $\ell \geq 1$,
$$
\begin{aligned}
& \left|2E\left[\int_t^T\bar{Y}_s^{n, m} \bar{g}^{\ell}(s, Y_s^m, \mathbb{P}_{(Y^m_s,V_s)}) d s\right]\right| \\
& \leq E\left[\int_t^T| \bar{Y}_s^{n, m}|^2 ds\right]+E\left[\int_t^T|\bar{g}^{\ell}(s, Y_s^m, \mathbb{P}_{(Y^m_s,V_s)})|^2 d s\right],
\end{aligned}
$$
where
$$
\begin{aligned}
& E\left[\int_t^T|\bar{g}^{\ell}(s, Y_s^m, \mathbb{P}_{(Y^m_s,V_s)})|^2 d s\right]\\
& \leq E\left[\int_t^T\int_{\mathbb{R}^2}\ell^2\rho(\ell y',\ell y'')|g(s,Y_s^m-y',E[Y_s^m]-y'',\mathbb{P}_{( \mathring{Y_s^m},V_s)})\right.\\
&\left.~~~~~~~~~~~~~~~~~~~~~~~~~~~~~~~~~~~-g(s,Y_s^m,E[Y_s^m],\mathbb{P}_{( \mathring{Y_s^m},V_s)}) |^2 dy'dy''ds\right].
\end{aligned}
$$
Note that, from the assumptions (A1), (A2) and (A4),
\begin{equation}\label{linearg}
|g(s, \omega,y,m,\mathbb{P}_{(\mathring{\xi},V_s)})|\leq C_{K,\alpha_1,\alpha_4}(1+|y|+|m|+|V_s|+(E[|\mathring{\xi}|^2])^{\frac{1}{2}}+(E[|V_s|^2])^{\frac{1}{2}}),
\end{equation}
and for $M,~C_{\ast}>0$, consider the continuity modulus
$$
\begin{aligned}
& \mathcal{X}_{M, C_{\ast}}(s, \omega, \delta):=\sup \left\{\left|g\left(s, \omega, y-y', m-y'', \mathbb{P}_{(\mathring{\xi},V_s)}\right)-g\left(s, \omega, y, m, \mathbb{P}_{(\mathring{\xi},V_s)}\right)\right|,\right.\\
&\left.~~~~~~~~~~~~~~~~~~~~~~~~~~~~~~|y'|,~|y''| \leq \delta,~|y| \leq M,~
 |m| \leq C_1,~(E[|\mathring{\xi}|^2])^{\frac{1}{2}} \leq C_{\ast}\right\}.
\end{aligned}
$$
Obviously, from the assumption \eqref{A15}, $\mathcal{X}_{M, C_{\ast}}(s, \omega, \delta) \rightarrow 0$, as $\delta \downarrow 0$, $ds\mathbb{P}(d\omega)$-a.e. Here, $\displaystyle C_{\ast}:=\sup_{n\geq1}\Big(E\Big[\sup_{t\in[0,T]} |Y^n_t|^2\Big]\Big)^{\frac{1}{2}}<+\infty$, because of \eqref{BSDEgn}.

\noindent Let $M$ be sufficiently large. Then,
$$
\begin{aligned}
& E\left[\int_0^T|\bar{g}^{\ell}(s, Y_s^m, \mathbb{P}_{(Y^m_s,V_s)})|^2 d s\right]\\
&\leq E\left[\int_0^T\int_{\mathbb{R}^2}\ell^2\rho(\ell y',\ell y'') \mathcal{X}^2_{M, C_{\ast}}\Big(s, \frac{1}{\ell}\Big) dy'dy''ds\mathbf{1}_{\{\sup_{s\in[0,T]} |Y^m_s|\leq M\}}\right]\\
&~~~~+E\left[\int_0^T\int_{\mathbb{R}^2}\ell^2\rho(\ell y',\ell y'') C_{K,\alpha_1,\alpha_4,C_{\ast}}(1+M+|Y^m_s|+|V_s|+(E[|V_s|^2])^{\frac{1}{2}}) dy'dy''ds\right.\\
&~~~~~~~~~~~~~\left.\cdot \mathbf{1}_{\{\sup_{s\in[0,T]} |Y^m_s|> M\}}\right].
\end{aligned}
$$
As, from H\"{o}lder inequality and Chebyshev's inequality,
$$
\begin{aligned}
& E[(|Y^m_s|+|V_s|)\mathbf{1}_{\{\sup_{s\in[0,T]} |Y^m_s|> M\}}]\\
&\leq C(E[(|Y^m_s|^2+|V_s|^2)])^\frac{1}{2}\Big(\mathbb{P}\Big\{\sup_{s\in[0,T]} |Y^m_s|> M\Big\}\Big)^\frac{1}{2}\\
&\leq\frac{C}{M}E\Big[\sup_{s\in[0,T]} |Y^m_s|^2\Big]+C(E[|V_s|^2])^\frac{1}{2}\Big(E\Big[\sup_{s\in[0,T]} \frac{|Y^m_s|^2}{M^2}\Big]\Big)^\frac{1}{2}\\
&\leq C\Big(\frac{C^2_{\ast}}{M}+\frac{C_{\ast}}{M}(E[|V_s|^2])^\frac{1}{2}\Big),
\end{aligned}
$$
then by using Jensen's inequality, we obtain
$$
\begin{aligned}
& E\left[\int_0^T|\bar{g}^{\ell}(s, Y_s^m, \mathbb{P}_{(Y^m_s,V_s)})|^2 d s\right]\\
&\leq E\left[\int_0^T\mathcal{X}^2_{M, C_{\ast}}\Big(s, \frac{1}{\ell}\Big)ds\mathbf{1}_{\{\sup_{s\in[0,T]} |Y^m_s|\leq M\}}\right]\\
&~~~~+C_{T,K,\alpha_1,\alpha_4,C_{\ast}}\Big(\frac{1+M}{M^2}+\frac{1}{M}+\frac{1}{M}\Big(\int_0^T E[|V_s|^2]ds\Big)^\frac{1}{2}\Big).
\end{aligned}
$$
Observe that, thanks to \eqref{linearg},
$$
\mathcal{X}_{M, C_{\ast}}\Big(s, \frac{1}{\ell}\Big) \leq C_{K,\alpha_1,\alpha_4,C_{\ast}}(1+M+|V_s|+(E[|V_s|^2])^{\frac{1}{2}}).
$$
By combining the above estimates, we get
$$
\begin{aligned}
& E\left[|\bar{Y}_t^{n, m}|^2+\int_t^T|\bar{Z}_s^{n, m}|^2 d s\right] \\
& \leq C E\left[\int_t^{T} |\bar{Y}_s^{n, m}|^2 d s\right]+\sup_{\ell \geq m \wedge n} E\left[\int_0^T \mathcal{X}_{M, C_{\ast}}^2\Big(s, \frac{1}{\ell}\Big) d s\right]\\
&~~~+C_{T,K,\alpha_1,\alpha_4,C_{\ast}}\Big(\frac{1}{M^2}+\frac{1}{M}\Big(1+\Big(E\Big[\int_0^T|V_s|^2 d s\Big]\Big)^{\frac{1}{2}}\Big)\Big), \quad t \in[0, T],
\end{aligned}
$$
and by Gronwall's inequality,
$$
\begin{aligned}
&\sup_{t\in[0,T]}E\left[|\bar{Y}_t^{n, m}|^2+\int_0^T|\bar{Z}_s^{n, m}|^2 d s\right]\\
& \leq C \sup_{\ell \geq m \wedge n} E\left[\int_0^T \mathcal{X}_{M,C_{\ast}}^2\Big(s, \frac{1}{\ell}\Big) d s\right]+C_{T,K,\alpha_1,\alpha_4,C_{\ast}}\Big(\frac{1}{M^2}+\frac{1}{M}\Big(1+\Big(E\Big[\int_0^T|V_s|^2 d s\Big]\Big)^{\frac{1}{2}}\Big)\Big).
\end{aligned}
$$
Hence, from the dominated convergence theorem,
$$
\begin{aligned}
& \varlimsup_{m,~ n \rightarrow \infty} \sup_{t \in[0, T]} E\left[|\bar{Y}_t^{n, m}|^2+\int_0^T|\bar{Z}_s^{n, m}|^2 d s\right]\\
& \leq C_{T,K,\alpha_1,\alpha_4,C_{\ast}}\Big(\frac{1}{M^2}+\frac{1}{M}\Big(1+\Big(E\Big[\int_0^T|V_s|^2 d s\Big]\Big)^{\frac{1}{2}}\Big)\Big),
\end{aligned}
$$
then taking the limit as $M \rightarrow \infty$, we get
$$\lim _{m, n \rightarrow \infty} \sup _{t\in[ 0, T]} E\left[|\bar{Y}_t^{n, m}|^2+\int_0^T|\bar{Z}_s^{n, m}|^2 d s\right]=0,$$
this implies  $Y^n \rightarrow Y$ in $L_{\mathbb{F}}^2(0, T ; \mathbb{R})$ and $Z^n \rightarrow Z$ in $L_{\mathbb{F}}^2(0, T ; \mathbb{R})$.
Hence, similar to the proof of (iii) in Lemma \ref{lemmajuanji}, we get
$$
\begin{aligned}
E\Big[\sup _{s \in[0, T]}\left|\int_s^T g_n(r, Y_r^n,\mathbb{P}_{(Y_r^n,V_r)}) d r-\int_s^T f(r, Y_r, V_r, \mathbb{P}_{(Y_r,V_r)}) d r\right|^2\Big]
 \rightarrow 0,~ n \rightarrow \infty .
\end{aligned}
$$
From the Burkholder-Davis-Gundy inequality we can get
$$
\begin{aligned}
&E\Big[\sup _{s \in[0, T]}\left|\int_s^T Z_r^n d W_r-\int_s^T Z_r d W_r\right|^2\Big] \leq C E\Big[\int_0^T|Z_r^n-Z_r|^2 d r\Big] \rightarrow 0,~ n \rightarrow \infty .
\end{aligned}
$$
Standard arguments allow now to conclude that $(Y,Z)\in \mathcal{S}^2(0,T;\mathbb{R})\times L_\mathbb{F}^2(0,T;\mathbb{R})$ solves \eqref{MFBSDEV}.
\end{proof}
\begin{example}\label{Ex2.5}\rm
Consider the following mean-field BSDE:
\begin{equation}\label{EX1}
Y_t=\xi+\int_t^T\big(-\sqrt{Y_s^+\wedge 1}-\sqrt{(E[Y_s])^+\wedge 1}+f_1(Z_s,E[h(Y_s,Z_s)])\big)ds-\int_t^TZ_s dW_s,~t\in[0,T].
\end{equation}
The driving coefficient $f(s,y,z,\mathbb{P}_{(\xi,\eta)})=-\sqrt{y^+\wedge 1}-\sqrt{(E[\xi])^+\wedge 1}+f_1(z,E[h(\xi,\eta)])$ is not Lipschitz, although we suppose $f_1:\mathbb{R}^2 \rightarrow \mathbb{R}$, $h:\mathbb{R}^2 \rightarrow \mathbb{R}$ bounded and Lipschitz, but
$$
\widetilde{f}(s,y,z,m,\mathbb{P}_{(\mathring{\xi},\eta)}) = -\sqrt{y^+\wedge 1}-\sqrt{m^+\wedge 1} + f_1(z,E[h(m+\mathring{\xi},\eta)]),
$$
where $(s,y,z)\in [0,T]\times \mathbb{R}\times\mathbb{R}$, $\xi,\ \eta\in L^2(\mathcal{F};\mathbb{R})$, satisfies our assumptions of Theorem \ref{le2.4}.
\end{example}
\begin{example}\label{EX2}\rm
Unlike the non-Lipschitz Example \ref{Ex2.5}, let us still consider an example with unbounded derivative which satisfies our assumptions too. We denote that
$$
t_0:=0,\quad t_{2\ell+1}:=t_{2\ell}+\frac{\pi}{\ell+1},\ \text{for }\ell \geq 0; \quad \ t_{2\ell}:=t_{2\ell -1}+\pi,\ \text{for } \ell \geq 1.
$$
The function $g$ is defined as
$$
g(y):= \mathbf{1}_{(-\infty,0)}(y)+\sum_{\ell\geq 0}\big\{\mathbf{1}_{[t_{2\ell},t_{2\ell+1})}(y)\cos\big((\ell +1)(y-t_{2\ell})\big)+\mathbf{1}_{[t_{2\ell +1},t_{2\ell+2})}(y)\cos(y-t_{2\ell+1}+\pi)\big\},
$$
for $y\in\mathbb{R}$. Then $g\in C^1(\mathbb{R})$ bounded by $1$, and the derivative of $g$ is
$$
g'(y)=\sum_{\ell\geq 0}\big\{-\mathbf{1}_{[t_{2\ell},t_{2\ell+1})}(y)(\ell +1)\sin\big((\ell+1)(y-t_{2\ell})\big)
-\mathbf{1}_{[t_{2\ell +1},t_{2\ell+2})}(y)\sin(y-t_{2\ell+1}+\pi)\big\},\quad y\in \mathbb{R}.
$$
This implies that  $0\leq g'(y)\leq 1$ on $[t_{2\ell+1},t_{2\ell+2})$;
and  $0\geq g'(y)\geq -\ell-1$ on $[t_{2\ell},t_{2\ell+1})$.
But $ t_{2\ell} = t_{2\ell-1}+\pi > t_{2\ell-2}+\pi > t_{2\ell-4}+2\pi > \cdots >t_0+\ell\pi = \ell\pi$, $\displaystyle\ell < \frac{1}{\pi}t_{2\ell}$, and so
$\displaystyle0 \geq g'(y) \geq -\frac{1}{\pi}t_{2\ell}-1 \geq -\frac{y}{\pi}-1$, for $y\in[t_{2\ell},t_{2\ell+1})$.
We also remark that for any $\ell\geq 0$, there exists $y_\ell \in (t_{2\ell}, t_{2\ell+1})$ such that $g'(y_{\ell})= -(\ell +1)(\rightarrow -\infty,\ \text{as }\ell \uparrow +\infty)$. Consequently,
\begin{itemize}
\item[$\rm(i)$] $g\in C^1(\mathbb{R})$ is bounded;
\item[$\rm(ii)$] $\displaystyle -(\frac{y}{\pi}+1)\leq g'(y)\leq 1$, for any $y\in\mathbb{R}$;
\item[$\rm(iii)$] $\displaystyle \liminf\limits_{y\rightarrow +\infty}\frac{g'(y)}{y}= -\frac{1}{\pi}.$
\end{itemize}
In particular, it follows $(y_2-y_1)(g(y_2) - g(y_1))\leq (y_2-y_1)^2$, for any $y_1,\ y_2\in\mathbb{R}$. Indeed, for $y_2>y_1$, and $\theta_{y_1,y_2}\in (y_1,y_2)$ such that $g(y_2) - g(y_1) = (y_2-y_1)g'(\theta_{y_1,y_2})$, note that
\begin{equation*}
g'(\theta_{y_1,y_2})
\left\{
\begin{aligned}
&= 0,\quad \theta_{y_1,y_2}<0;\\
&\leq 0,\quad \theta_{y_1,y_2}\in [t_{2\ell},t_{2\ell+1}),\ \ell\geq 0;\\
&\leq 1,\quad \theta_{y_1,y_2}\in [t_{2\ell+1},t_{2\ell+2}),\ \ell\geq 0,\\
\end{aligned}
\right.
\end{equation*}
and so
$$
(y_2-y_1)(g(y_2)-g(y_1)) = (y_2-y_1)^2g'(\theta_{y_1,y_2})\leq (y_2-y_1)^2.
$$
Consider the following BSDE:
\begin{equation}\label{Ex0}
Y_t=\xi+\int_t^T \big(g(Y_s)+g(E[Y_s])+f_1(Z_s,E[h(Y_s,Z_s)])\big)ds-\int_t^TZ_s dW_s,~t\in[0,T],
\end{equation}
where $\xi\in L^2(\mathcal{F}_T;\mathbb{R})$, $f_1:\mathbb{R}^2 \rightarrow \mathbb{R}$, $h:\mathbb{R}^2 \rightarrow \mathbb{R}$ are bounded and Lipschitz. From Theorem \ref{le2.4}, BSDE \eqref{Ex0} has a unique solution $(Y,Z)\in\mathcal{S}^2(0,T;\mathbb{R})\times L^2_{\mathbb{F}}(0,T;\mathbb{R})$.
\end{example}

\begin{example}\rm
Consider the following mean-field BSDE:
\begin{equation}\label{EX3}
Y_t=\xi+\int_t^T(l(Y_s)+l(E[Y_s])+Z_s+E[Z_s])ds-\int_t^TZ_s dW_s,~t\in[0,T],
\end{equation}
where $\xi\in L^2(\mathcal{F}_T;\mathbb{R})$ and
\begin{equation}\label{exl}
l(y)=\left\{
             \begin{array}{l}
           0,~~~~~~~~~~y\in(-\infty,0],\\
-\sqrt{y} ,~~~~~y\in(0,1],\\
-e^{1-y} ,~~~y\in(1,\infty].
             \end{array}
\right.
\end{equation} Let $\displaystyle f(y,z,\mathbb{P}_{(\xi,\eta)}):=l(y)+l(E[\xi])+z+E[\eta]$, for $y,~z\in\mathbb{R}$, $\xi,~\eta\in L^2(\mathcal{F};\mathbb{R})$. Then, we have $\displaystyle \widetilde{f}(y,z,m,\mathbb{P}_{(\mathring{\xi},\eta)}):=l(y)+l(E[\mathring{\xi}+m])+z+E[\eta]=l(y)+l(m)+z+E[\eta]$, where $m\in\mathbb{R}$ and $\mathring{\xi}\in L_0^2(\mathcal{F};\mathbb{R})$. Obviously, the function $f$ does not satisfy the Lipschitz continuity, but
  satisfies (A1). And $\widetilde{f}$ satisfies (A2) and (A3)(\ref{A15}). Moreover, for all $y,~y'\in\mathbb{R}$, we have
$$
\begin{aligned}
&(y-y')(\widetilde{f}(y,z,m,\mathbb{P}_{(\mathring{\xi},\eta)})-\widetilde{f}(y',z,m,\mathbb{P}_{(\mathring{\xi},\eta)}))=(l(y)-l(y'))(y-y')
\leq C|y-y'|^2,
\end{aligned}
$$
where $C>0$ is a constant, which implies (2.5). Similarly, for all $\xi,~\xi'\in L^2(\mathcal{F};\mathbb{R})$,
$$
\begin{aligned}
E[(\xi-\xi')(\widetilde{f}(y,z,E[\xi],\mathbb{P}_{(\mathring{\xi},\eta)})-\widetilde{f}(y,z,E[\xi'],\mathbb{P}_{(\mathring{\xi},\eta)}))]\leq CE[|\xi-\xi'|^2],
\end{aligned}
$$
which implies (2.6). Consequently, from Theorem \ref{le2.4}, BSDE \eqref{EX3} has a unique solution $(Y,Z)\in\mathcal{S}^2(0,T;\mathbb{R})\times L^2_{\mathbb{F}}(0,T;\mathbb{R})$.
\end{example}
\section{Mean-field SDEs with monotonicity condition}
Let us formulate the following assumptions:
\begin{itemize}
 \item[$\rm(B1)$]  $b=b(s,x,\mu):[0,T]\times\Omega\times\mathbb{R}\times\mathscr{P}_2(\mathbb{R})\mapsto{\mathbb{R}}$ is $\mathbb{F}$-adapted, for all $(x,\mu)\in \mathbb{R}\times\mathscr{P}_2(\mathbb{R})$, and there exists a constant\ $K\ge 0$,\ such that\ $\mathbb{P}$-a.s.,\ for all\ $t\in[0,T],\ x\in\mathbb{R},\ X\in L^2(\mathcal{F};\mathbb{R})$,\
$$|b(t,x,\mathbb{P}_{X})|\leq K\big(1+|x|+\big(E[|X|^2]\big)^{\frac{1}{2}}\big).$$

 \item[$\rm(B2)$]
With the notation
$$
\widetilde{b}(s,\omega,x,m,\mathbb{P}_{\mathring{\xi}}):= b(s,\omega,x,\mathbb{P}_{\mathring{\xi}+m}),
$$
$\widetilde{b}$ satisfies the following conditions: There exists a constant\ $\alpha_1>0$, such that
\begin{equation}\label{A12a}
\begin{split}
&\big|\widetilde{b}(s,\omega,x,m,\mathbb{P}_{\mathring{\xi}})-\widetilde{b}(s,\omega,x,m,\mathbb{P}_{\mathring{\xi}'})\big|\le \alpha_1 W_2(\mathbb{P}_{\mathring{\xi}},\mathbb{P}_{\mathring{\xi}'}),
\end{split}
\end{equation}
 for all\ $s\in[0,T],\ \omega\in\Omega,\ x,\ m\in\mathbb{R},\ \mathring{\xi},\ \mathring{\xi}'\in L_0^2(\mathcal{F};\mathbb{R})\big(\text{defined in assumption (A2)}\big)$.

\item[$\rm(B3)$] The coefficient $\widetilde{b}$ is continuous w.r.t. $x$ and $m$, and there exist constants\ $C,\ \alpha_2,\ \alpha_3\ge 0$,\ such that for all\ $s\in[0,T],\ \omega \in \Omega,\ x,\ x',\ m,\ m'\in\mathbb{R},\ \mathring{\xi}\in L_0^2(\mathcal{F};\mathbb{R}),\ \theta,\ X,\ X' \in L^2(\mathcal{F};\mathbb{R})$,
\begin{equation}\label{A13a}
\begin{split}
(\widetilde{b}(s,\omega,x,m,\mathbb{P}_{\mathring{\xi}})-\widetilde{b}(s,\omega,x',m,\mathbb{P}_{\mathring{\xi}}))(x-x') \le \alpha_2{\vert x-x' \vert}^2,
\end{split}
\end{equation}
\begin{equation}\label{A14a}
\begin{split}
E\big[(\widetilde{b}(s,\theta,E[X],\mathbb{P}_{\mathring{\xi}})-\widetilde{b}(s,\theta,E[X'],\mathbb{P}_{\mathring{\xi}}))(X-X')\big] \le \alpha_3E\big[{\vert X-X' \vert}^2\big].
\end{split}
\end{equation}
\begin{equation}\label{2.7'}
\begin{aligned}[c]
&\text{Moreover, }(x,m)\rightarrow \widetilde{b}(s,\omega,x,m,\mathbb{P}_{\mathring{\xi}})\text{ is uniformly continuous on compacts,}\\ &\text{uniformly w.r.t. }\mathbb{P_{\mathring{\xi}}},\ ds\mathbb{P}(d\omega)\text{-a.e.}
\end{aligned}
\end{equation}

\item[$\rm(B4)$]$\sigma=\sigma(t,x,\mu):[0,T]\times\Omega\times{\mathbb{R}}\times\mathscr{P}_2(\mathbb{R})\mapsto{\mathbb{R}}$
  is Lipschitz w.r.t.\ $x,\ \mu$, i.e., there exists a constant\ $C_1>0$,\  such that $\mathbb{P}$-a.s., for all\ $t\in [0,T],\ x_1,\ x_2\in \mathbb{R},\ \mu_1,\ \mu_2\in\mathscr{P}_2(\mathbb{R})$,\
 $$\vert \sigma(t,x_1,\mu_1)-\sigma(t,x_2,\mu_2)\vert\le C_1(|x_1-x_2|+W_2(\mu_1,\mu_2)).$$

 \item[$\rm(B5)$]
 $\sigma(\cdot,0,\delta_0)\in L_{\mathbb{F}}^p(0,T;\mathbb{R}),\ p\ge2$.
 \end{itemize}
\begin{theorem}\label{le2.5}
Under the assumptions $\rm(B1)$-$\rm(B5)$, the following mean-field stochastic differential equation
 \begin{equation}\label{eq 2.13}
\left\{
             \begin{array}{l}
           dX_t=b(t,X_t,\mathbb{P}_{X_t})dt+\sigma(t,X_t,\mathbb{P}_{X_t})dW_t,~t\in[0,T],\\
X_0=x_{0} ,
             \end{array}
\right.
  \end{equation}
 has a unique solution\ $X\in S^2(0,T;\mathbb{R})$.

 Moreover, for all $p\geq2$, there exists a constant $C=C (T, K,p)>0$ only depending $K$, $p$ and $T$, such that the following estimate holds true,
 \begin{equation}\label{eq 2.14}
\begin{aligned}
E[\sup_{s\in[0,T]}|X_{s}|^p]\le C(p,K,T)(1+|x_0|^p).
\end{aligned}
\end{equation}
\end{theorem}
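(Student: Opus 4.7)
My plan mirrors the treatment of the mean-field BSDE in Theorem~\ref{le2.4} and Proposition~\ref{111}, transposed to the forward setting. For uniqueness, I would apply It\^o's formula to $|\hat X_t|^2$ with $\hat X := X^1 - X^2$ and take expectation (the stochastic integral is a genuine martingale since $\sigma$ is Lipschitz and $X^i \in \mathcal{S}^2$), then decompose $b(s,X^1,\mathbb{P}_{X^1})-b(s,X^2,\mathbb{P}_{X^2})$ into three telescoping pieces via $\widetilde b$, namely
\[
b(s,X^1,\mathbb{P}_{X^1})-b(s,X^2,\mathbb{P}_{X^1}),\ \ \widetilde b(s,X^2,E[X^1],\mathbb{P}_{X^1-E[X^1]})-\widetilde b(s,X^2,E[X^2],\mathbb{P}_{X^1-E[X^1]}),
\]
and the Wasserstein-difference of the centred laws. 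These are controlled respectively by the $x$-monotonicity (B3)\eqref{A13a}, the mean-monotonicity (B3)\eqref{A14a} (after multiplying by $\hat X_s$ and taking expectation), and the Wasserstein-Lipschitz condition (B2)\eqref{A12a}. Together with the Lipschitz estimate (B4) for the diffusion, this yields $E[|\hat X_t|^2]\le C\int_0^t E[|\hat X_s|^2]ds$, and Gronwall closes the argument.

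For existence, I would follow the strategy of Lemma~\ref{lemmajuanji} and mollify $b$ jointly in $(x,m)$ while freezing the centred distribution $\mathbb{P}_{X-E[X]}$:
\[
b^n(s,x,\mathbb{P}_X):=\int_{\mathbb{R}^2} n^2\,\widetilde b\bigl(s,x-x',E[X]-x'',\mathbb{P}_{X-E[X]}\bigr)\rho(nx',nx'')\,dx'\,dx'',
\]
with $\rho$ as in Lemma~\ref{lemmajuanji}. The resulting $b^n$ is Lipschitz in $x$ and in $\mathbb{P}_X$ under $W_2$ (with $n$-dependent constants), inherits the linear growth (B1) uniformly in $n$, and converges pointwise to $b$. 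Since $\sigma$ is already Lipschitz, the classical Lipschitz mean-field SDE theory (e.g.~\cite{ref15}) provides a unique solution $X^n\in\mathcal{S}^2$ of the approximating SDE. Applying It\^o's formula to $|X^n_t|^p$, together with (B1), (B4), (B5), the BDG inequality, and Gronwall, yields the uniform bound
\[
\sup_n E\Bigl[\sup_{t\in[0,T]}|X^n_t|^p\Bigr]\le C(p,K,T)(1+|x_0|^p),
\]
which in particular will give \eqref{eq 2.14} for the limit.

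For the Cauchy property, set $\bar X^{n,m}:=X^n-X^m$ and $\bar b^{n,m}:=b^n-b^m$, apply It\^o to $|\bar X^{n,m}_t|^2$, and split the drift difference as $[b^n(s,X^n,\mathbb{P}_{X^n})-b^n(s,X^m,\mathbb{P}_{X^m})]+\bar b^{n,m}(s,X^m,\mathbb{P}_{X^m})$. The first bracket is controlled by $\alpha_1,\alpha_2,\alpha_3$ together with the Lipschitz constant of $\sigma$, and these bounds are $n$-uniform because $b^n$ inherits the monotonicity/Wasserstein-Lipschitz structure of $b$. For the second term I would reproduce the continuity-modulus argument of Proposition~\ref{111}: on $\{\sup_t|X^m_t|\le M\}$ bound $|\bar b^{n,m}|$ by a modulus derived from (B3)\eqref{2.7'} that vanishes as $n,m\to\infty$; on the complement use the uniform linear growth, Chebyshev/H\"older and the bound $C_*:=\sup_n(E[\sup_t|X^n_t|^2])^{1/2}$. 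Sending first $n,m\to\infty$ and then $M\to\infty$ shows $(X^n)$ is Cauchy in $\mathcal{S}^2$; passing to the limit in the approximating SDE (for the drift by a dominated convergence argument mirroring Lemma~\ref{lemmajuanji}(iii), for the stochastic integral by BDG) shows the limit $X$ solves \eqref{eq 2.13}, and \eqref{eq 2.14} follows from Fatou. The main obstacle, as in Proposition~\ref{111}, is that the Lipschitz constants of $b^n$ blow up with $n$, so the Cauchy property cannot rely on a contraction estimate; the resolution is precisely the $n$-uniform monotonicity combined with the modulus-of-continuity estimate on $\{|X^m|\le M\}$ and $L^2$-tail control on its complement.
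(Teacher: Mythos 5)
Your proposal is correct and follows essentially the same route as the paper: the uniqueness argument (telescoping through $\widetilde b$, using (B2)--(B4) and Gronwall) is identical, and your mollification-plus-monotonicity existence scheme is exactly what the paper means when it disposes of existence by saying ``similar to the proof of Theorem \ref{le2.4}''. The only minor variation is in the $L^p$ estimate: you obtain it uniformly on the approximations $X^n$ and pass to the limit by Fatou, whereas the paper works directly on the solution $X$ with stopping times $\tau_n=\inf\{t:|X_t|\ge n\}\wedge T$ and bootstraps from $\mathcal{S}^2$ to $\mathcal{S}^p$ via $\bigl(E[|X_s|^2]\bigr)^{p/2}\le E[|X_s|^p]$ and Gronwall; both are valid.
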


 \begin{proof}[Proof.]
\indent We show the uniqueness first.
Suppose\ $X^1,~X^2\in S^2(0,T;\mathbb{R})$ are solutions to equation (\ref{eq 2.13}), and put\ $\widehat X=X^1-X^2$. Applying  It\^{o}'s formula to $|\widehat X|^2$ and taking expectation, with (B2), (B3) and (B4), we have
\begin{align*}
  &E[|\widehat X_t|^2]\\
  =&E\left[\int_0^t \left\{2\widehat X_s(b(s,X^1_s,\mathbb{P}_{X^1_s})-b(s,X^2_s,\mathbb{P}_{X^2_s}))+|\sigma(s,X^1_s,\mathbb{P}_{X^1_s})-\sigma(s,X^2_s,\mathbb{P}_{X^2_s})|^2\right\} ds\right]\\
\le&2C^2_1 E\left[\int_0^t(|\widehat X_s|^2+E[|\widehat X_s|^2])ds\right]
+2 E\left[\int_0^t \widehat X_s(\widetilde{b}(s,X^1_s,E[X^1_s],\mathbb{P}_{\mathring{X}^1_s})-\widetilde{b}(s,X^2_s,E[X^1_s],\mathbb{P}_{\mathring{X}^1_s}))ds\right]\\
&+2E\left[\int_0^t \widehat X_s(\widetilde{b}(s,X^2_s,E[X^1_s],\mathbb{P}_{\mathring{X}^1_s})-\widetilde{b}(s,X^2_s,E[X^2_s],\mathbb{P}_{\mathring{X}^1_s})) ds\right]\\
&+2E\left[\int_0^t \widehat X_s(\widetilde{b}(s,X^2_s,E[X^2_s],\mathbb{P}_{\mathring{X}^1_s})-\widetilde{b}(s,X^2_s,E[X^2_s],\mathbb{P}_{\mathring{X}^2_s})) ds\right]\\
\le&(4C^2_1+\alpha_1^2 +2\alpha_2+2\alpha_3+2)E\left[\int_0^t|\widehat X_s|^2ds\right],
 \end{align*}
where $\mathring{X}:=X-E[X]$. Thus, from Gronwall's inequality, we have  $E[|\widehat X_t |^2]=0$, i.e., $X^1_t=X^2_t,~t\in[0,T]$, $\mathbb{P}$-a.s.\\
\indent
Using an argument similar to that of proof for Theorem 2.1, we  prove the existence of the solution $X\in S^2(0,T;\mathbb{R})$.
 Let us now prove estimate (\ref{eq 2.14}). Let $n\geq 1$ and $\tau_n:= \inf\{t\geq 0: |X_t|\geq n\}\wedge T$. By applying  It\^{o}'s formula to  $(|X_t|^2)^\frac{p}{2}$  and taking expectation we have,  for $t\in[0,T]$,
\begin{align*}
 &E[\sup_{s\in[0,t]}|X_{s\wedge\tau_n}|^p]\\ \leq&|x_0|^p+pE\left[\int_0^{{t\wedge\tau_n}}|X_s|^{p-2}(X_sb(s,X_s,\mathbb{P}_{X_s}))^+ds\right]
 +\frac{p(p-1)}{2}E\left[\int_0^{{t\wedge\tau_n}}|X_s|^{p-2}|\sigma(s,X_s,\mathbb{P}_{X_s})|^2ds\right]\\
 &+pE\Big[\sup_{s\in[0,t]}\Big|\int_0^{{s\wedge\tau_n}}|X_r|^{p-2}X_r\sigma(r,X_r,\mathbb{P}_{X_r})dW_r\Big|\Big]\\
 \le&|x_0|^p+CKpE\left[\int_0^{{t\wedge\tau_n}}|X_s|^{p-1}ds\right]+\frac{1}{4}E[\sup_{s\in[0,t]}|X_{s\wedge\tau_n}|^p]\\
 &+C_p E\left[\int_0^{{t\wedge\tau_n}}|X_s|^{p-2}|\sigma(s,X_s,\mathbb{P}_{X_s})-\sigma(s,0,\delta_0)+\sigma(s,0,\delta_0)|^2ds\right],~t\in[0,T].
 \end{align*}
 Thus,
 \begin{align*}
 &E[\sup_{s\in[0,t]}|X_{s\wedge\tau_n}|^p]\\
 \le&|x_0|^p+C_{p,K}E\Big[\int_0^{{t\wedge\tau_n}}\Big(|X_s|^{p-1}+|X_s|^{p-2}|\sigma(s,0,\delta_0)|^2+|X_s|^{p}
+|X_s|^{p-2}E[|X_s|^2]\Big)ds\Big],~t\in[0,T].
\end{align*}
By using Young's inequality, (B5) and $X\in S^2(0,T;\mathbb{R})$, we obtain
\begin{equation*}
\begin{aligned}
 E[\sup_{s\in[0,t]}|X_{s\wedge\tau_n}|^p]&\le |x_0|^p+C_{p,K}E\left[\int_0^{t\wedge\tau_n}\left(1+|X_s|^{p}+(E[|X_s|^2])^{\frac{p}{2}}\right)ds\right]\\
 &\leq|x_0|^p+C_{p,K}E\left[\int_0^{t}\left(1+|X_{s\wedge\tau_n}|^{p}+\big(E[|X_s|^2]\big)^\frac{p}{2}\right)ds\right],~t\in[0,T].
\end{aligned}
\end{equation*}
Hence, for all $t\in [0,T]$,
\begin{equation*}
\begin{aligned}
E[\sup_{s\in[0,t]}|X_{s\wedge\tau_n}|^p]\le |x_0|^p+C_{p,K}\int_0^{t}\Big(1+E[\sup_{r\in[0,s]}|X_{r\wedge\tau_n}|^{p}]+\big(E[|X_s|^2]\big)^\frac{p}{2}\Big)ds,
\end{aligned}
\end{equation*}
and by Gronwall's Lemma we obtain $$E[\sup_{s\in[0,t]}|X_{s\wedge\tau_n}|^p]\le C(p,K,T)\Big(1+|x_0|^p+\int_0^t\big(E[|X_s|^2]\big)^\frac{p}{2}ds\Big),\quad t\in [0,T],$$
for $n\geq1$, where $C(p,K,T)$ is a constant depending on $p$, $K$ and $T$. Since none of the constants
used in the above computation depends on $n$, letting
$n\rightarrow\infty$ by using the monotone convergence theorem yields
$$
E[\sup_{s\in[0,t]}|X_s|^p]\leq C(p,K,T)\Big(1+|x_0|^p+\int_0^t\big(E[|X_s|^2]\big)^\frac{p}{2}ds\Big),\quad t\in [0,T].
$$
Since $X\in S^2(0,T;\mathbb{R})$, this proves that $E[\sup_{s\in [0,T]}|X_s|^p]<+\infty$, and using H\"{o}lder's inequality,
$\big(E[|X_s|^2]\big)^{\frac{p}{2}}\leq E[|X_s|^p]$, followed by Gronwall's lemma we can conclude.
\end{proof}
\begin{example}\rm
In analogy to (\ref{EX1}), we obtain that the following mean-field stochastic differential equation
\begin{equation}
\left\{
\begin{aligned}
&dX_t =\big(-\sqrt{X_t^+\wedge 1}-\sqrt{(E[X_t])^+\wedge 1}+b_1(X_t,E[h(X_t)])\big)dt + \sigma_1 (X_t,E[l(X_t)])dW_t,\ t\in [0,T],\\
&X_0 = x_0\in\mathbb{R},
\end{aligned}\right.
\end{equation}
has a unique solution $X\in S^2(0,T;\mathbb{R})$, for $b_1,\ \sigma_1:\mathbb{R}^2\rightarrow \mathbb{R}$, $h,\ l:\mathbb{R}\rightarrow\mathbb{R}$ bounded and Lipschitz.
\end{example}
\begin{example}\rm
The following mean-field stochastic differential equation
\begin{equation}
\left\{
\begin{aligned}
&dX_t =\big(g(X_t)+g(E[X_t])+b_1(X_t,E[h(X_t)])\big)dt + \sigma_1 (X_t,E[l(X_t)])dW_t,\ t\in [0,T],\\
&X_0 = x_0\in\mathbb{R},
\end{aligned}\right.
\end{equation}
has a unique solution $X\in S^2(0,T;\mathbb{R})$, for $b_1,\ \sigma_1:\mathbb{R}^2\rightarrow \mathbb{R}$, $h,\ l:\mathbb{R}\rightarrow\mathbb{R}$ bounded and Lipschitz, and $g$ defined as in Example \ref{EX2}.
\end{example}
\begin{example}\rm
Let function $l$ be defined as \eqref{exl}. In analogy to \eqref{EX3}, we obtain that the following mean-field stochastic differential equation
 \begin{equation}\label{eq 2.13}
\left\{
             \begin{array}{l}
           dX_t=\Big(\displaystyle l(X_t)+l(E[X_t])\Big)dt+(X_t+E[X_t])dW_t,~t\in[0,T],\\
X_0=x_{0}\in\mathbb{R} ,
             \end{array}
\right.
  \end{equation}
 has a unique solution\ $X\in S^2(0,T;\mathbb{R})$.
\end{example}
\section{The stochastic control problem under monotonicity condition}
\indent We consider the following general controlled mean-field stochastic differential system
\begin{equation}\label{eq 4.1}
\left\{
             \begin{array}{l}
           dX^u_t=b(t,X^u_t,\mathbb{P}_{X^u_t},u_t)dt+\sigma(t,X^u_t,\mathbb{P}_{X^u_t},u_t)dW_t,\\
X^u_0=x_{0} ,
             \end{array}
\right.
\end{equation}
where\ $b,\sigma:[0,T]\times\mathbb{R}\times\mathscr{P}_2(\mathbb{R})\times U \to \mathbb{R}$ are  deterministic functions.
An admissible control\ $u$ is an \ $\mathbb{F}$-adapted and square-integrable process, with values in a given non-empty convex subset\ $U$ of $\mathbb{R}^m$. We denote the set of admissible controls by\ ${\mathscr{U}}_{ad}$.\\
\indent The cost functional is defined as follows:
\begin{equation}\label{eq 4.2}
J(u)=E\left[\int_0^T f(t,X_{t}^{u},{\mathbb{P}}_{X_{t}^{u}},u_{t})dt+h(X_{T}^{u},{\mathbb{P}}_{X_{T}^{u}})\right],
\end{equation}
where $f:[0,T]\times\mathbb{R}\times\mathscr{P}_2(\mathbb{R})\times U \to \mathbb{R},\ h:\mathbb{R}\times\mathscr{P}_2(\mathbb{R})\to\mathbb{R}$ are  deterministic functions.\\
\indent We call \ $u^*\in{{\mathscr{U}}_{ad}}$ an optimal control, if it satisfies
 \begin{equation}\label{eq 4.3}
  J(u^*)=\inf\limits_{u\in{{\mathscr{U}}_{ad}}}J(u),
 \end{equation}
and we denote the corresponding state process by\ $X^*:=X^{u^*}$, which is the solution of equation\ (\ref{eq 4.1}) with the optimal control process $u^*$.

We give the following assumptions:
\begin{itemize}
\item[(H1)]\  Suppose that for $t\in[0,T]$, $x$, $m\in\mathbb{R}$, $u\in U$, $\xi\in L^2(\mathcal{F};\mathbb{R})$ and $\mu\in\mathscr{P}_2(\mathbb{R})$, with the notation
    $$
    \widetilde{b}(t,x,m,\mathbb{P}_{\mathring{\xi}},u):=b(t,x,\mathbb{P}_{\mathring{\xi}+m},u),\text{~for~} \mathring{\xi}:=\xi-E[\xi],
    $$
    and
    $$
    \widetilde{b}(t,x,m,\mu,u)= \widetilde{b}_0(t,x,\mu,u)+ \widetilde{b}_1(t,m,\mu),
    $$
    where $\widetilde{b}_0:[0,T]\times\mathbb{R}\times\mathscr{P}_2(\mathbb{R})\times U \to \mathbb{R}$ and $\widetilde{b}_1:[0,T]\times\mathbb{R}\times\mathscr{P}_2(\mathbb{R}) \to \mathbb{R}$,  the function $\widetilde{b}$ is Borel measurable and differentiable w.r.t. $(x,~m,~\mu,~u)$, and its derivatives are continuous, with $\partial_u \widetilde{b}_0$, $\partial_\mu \widetilde{b}_0$, $\partial_\mu \widetilde{b}_1$ bounded, $\partial_m \widetilde{b}_1(t,x,\mu)\leq\alpha_3$, $\partial_x \widetilde{b}_0(t,x,\mu,u)\leq\alpha_2$, and $\partial_x \widetilde{b}$ is of polynomial growth. Moreover, $b(\cdot,0,\delta_0,\cdot)\leq C,$ $dsd\mathbb{P}$-a.e., for some constant $C>0$.

\item[(H2)] Suppose that the function $\sigma$ is Borel measurable and differentiable w.r.t. $(x,~\mu,~u)$, and the derivatives are bounded and continuous. Moreover, $\sigma(\cdot,0,\delta_0,\cdot)\in L^p_{\mathbb{F}}(0,T;\mathbb{R})$, where $p\geq2$, and $\delta_0$ is the Dirac measure at $0$.

 \item[(H3)]Suppose that functions $f$ and $h$ are Borel measurable and differentiable w.r.t. $(x,~\mu,~u)$ and $(x,~\mu)$, respectively. The derivatives w.r.t. $(x,~\mu)$ are of polynomial growth and continuous, and $\partial_u f$ is bounded and continuous.
 \end{itemize}

 \begin{remark}\text{ }\\
 \rm(i)\ Let us point out that\ $\widetilde{b}_0=g(x),\ \widetilde{b}_1:=g(m)$, for $g$ defined in Example \ref{EX2}, satisfy the above assumptions.\\
\rm(ii)\ From (H1), for all $t,\ \theta,\ X,\ X',\ u_t\in L^2(\mathcal{F};\mathbb{R}),\ \mathring{\xi}\in L^2_0(\mathcal{F};\mathbb{R})$, we get
$$
\begin{aligned}
&E\Big[\Big(\widetilde b (t,\theta,E[X], \mathbb{P}_{\mathring{\xi}}, u_t)-b (t,\theta,E[X'], \mathbb{P}_{\mathring{\xi}}, u_t)\Big)(X-X')\Big]\\
&=E\big[\big(b_1(t,E[X],\mathbb{P}_{\mathring{\xi}})-b_1(t,E[X'],\mathbb{P}_{\mathring{\xi}})\big)(X-X')\big]\\
&= \big(b_1(t,E[X],\mathbb{P}_{\mathring{\xi}})-b_1(t,E[X'],\mathbb{P}_{\mathring{\xi}})\big)(E[X]-E[X'])\\
& \leq \alpha_3 \big(E[X]-E[X']\big)^2\leq \alpha_3 E[|X-X'|^2],
\end{aligned}
$$
\qquad i.e., we have $\rm(B3)$-(\ref{A14a}).
\end{remark}
One checks easily that our coefficients $b$ and $\sigma$ satisfy the assumptions $\rm(B1)$-$\rm(B5)$.
Below we first show that the equation (\ref{eq 4.1}) has a unique solution.
 \begin{proposition}\label{th4.1}
 Under the assumptions  $\rm(H1)$-$\rm(H2)$, the mean-field stochastic differential equation (\ref{eq 4.1}) has  a unique solution $X^u\in S^p(0,T;\mathbb{R}^n)$. Moreover, there exists a constant $C$ depending on $T$ and $p\ge 2$, such that
 \begin{equation}\label{eq 4.4}
 E[\sup_{t\in[0,T]}|X^u_t|^p ]\le C(1+|x_0|^p).
 \end{equation}
 \end{proposition}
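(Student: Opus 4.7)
The plan is to reduce Proposition~\ref{th4.1} to Theorem~\ref{le2.5}. For any fixed admissible $u\in\mathscr{U}_{ad}$, set $\bar b(t,\omega,x,\mu):=b(t,x,\mu,u_t(\omega))$ and $\bar\sigma(t,\omega,x,\mu):=\sigma(t,x,\mu,u_t(\omega))$. I would check that these $\mathbb{F}$-adapted random coefficients satisfy $\rm(B1)$--$\rm(B5)$, so that Theorem~\ref{le2.5} directly produces a unique $X^u\in\mathcal{S}^2(0,T;\mathbb{R})$ solving \eqref{eq 4.1}, and the moment bound \eqref{eq 4.4} for every $p\ge 2$ follows from \eqref{eq 2.14}; the extension to the vector-valued state in $\mathbb{R}^n$ is componentwise and routine.

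First I would verify $\rm(B2)$ and $\rm(B3)$, as these are the most directly encoded in (H1). Boundedness of $\partial_\mu\widetilde b_0$ and $\partial_\mu\widetilde b_1$ gives \eqref{A12a} via Kantorovich--Rubinstein duality for $W_2$. The one-sided bounds $\partial_x\widetilde b_0\le\alpha_2$ and $\partial_m\widetilde b_1\le\alpha_3$, combined with the structural splitting $\widetilde b=\widetilde b_0+\widetilde b_1$ in (H1) (so $\widetilde b_0$ is independent of $m$ and $\widetilde b_1$ is independent of $x$), yield the monotonicity inequalities \eqref{A13a} and \eqref{A14a} by the mean-value theorem, the latter exactly as spelled out in the Remark following (H3). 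Continuity of the derivatives produces the uniform-on-compacts continuity \eqref{2.7'}. Conditions $\rm(B4)$ and $\rm(B5)$ are then immediate from (H2): bounded and continuous derivatives of $\sigma$ in $(x,\mu,u)$ give the Lipschitz property in $(x,\mu)$, and the $L^p$-integrability of $\sigma(\cdot,0,\delta_0,\cdot)$, together with the boundedness of $\partial_u\sigma$ and the square-integrability of $u$, give $\bar\sigma(\cdot,0,\delta_0)\in L^p_{\mathbb{F}}(0,T;\mathbb{R})$.

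For $\rm(B1)$ I would decompose
\begin{equation*}
b(t,x,\mathbb{P}_X,u)=b(t,0,\delta_0,u)+\big(\widetilde b_0(t,x,\mathbb{P}_{\mathring X},u)-\widetilde b_0(t,0,\delta_0,u)\big)+\big(\widetilde b_1(t,E[X],\mathbb{P}_{\mathring X})-\widetilde b_1(t,0,\delta_0)\big),
\end{equation*}
control the $\mu$-differences by $W_2$ via the bounded $\partial_\mu\widetilde b_0$ and $\partial_\mu\widetilde b_1$, handle the $x$- and $m$-differences through the mean-value theorem applied to $\partial_x\widetilde b_0$ and $\partial_m\widetilde b_1$, and absorb the base point using the bound $b(\cdot,0,\delta_0,\cdot)\le C$ from (H1). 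With $\rm(B1)$--$\rm(B5)$ in hand, Theorem~\ref{le2.5} delivers the unique $X^u\in\mathcal{S}^2(0,T;\mathbb{R})$, and estimate \eqref{eq 2.14} upgrades this to $\mathcal{S}^p$ for every $p\ge 2$, producing \eqref{eq 4.4}.

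The main obstacle I anticipate is the verification of the linear-growth assumption $\rm(B1)$: since $\partial_x\widetilde b_0$ is only one-sidedly bounded above by $\alpha_2$ and otherwise of polynomial growth, $|b|$ is not literally of linear growth in $x$. The clean workaround is to bypass pointwise linear growth and run the stopped It\^o computation for $|X^u|^p$ using the monotone inner-product estimate
\begin{equation*}
x\cdot b(t,x,\mathbb{P}_X,u)\le \alpha_2 |x|^2+ x\cdot b(t,0,\delta_0,u)+C|x|\big(1+W_2(\mathbb{P}_{\mathring X},\delta_0)\big),
\end{equation*}
together with the Lipschitz bound on $\sigma$ from $\rm(B4)$, and then close the $L^p$-estimate via the Burkholder--Davis--Gundy inequality, Young's inequality and Gronwall's lemma, exactly as in the proof of Theorem~\ref{le2.5}.
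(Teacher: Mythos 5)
Your proposal takes essentially the same route as the paper: the paper's entire argument for Proposition~\ref{th4.1} is the assertion that under (H1)--(H2) the coefficients ``easily'' satisfy (B1)--(B5), so that Theorem~\ref{le2.5} applies directly, and your verification of (B2)--(B5) spells out exactly that reduction. Your concern about (B1) is well founded and identifies a point the paper glosses over --- since $\partial_x\widetilde b_0$ is only bounded above by $\alpha_2$ and $\partial_x\widetilde b$ is merely of polynomial growth, linear growth of $b$ in $x$ does not follow from (H1) as written --- and your monotonicity-based stopped It\^o estimate is the right repair for the moment bound \eqref{eq 4.4}; just note that the existence part of Theorem~\ref{le2.5} (proved by the approximation scheme of Theorem~\ref{le2.4}, which also invokes the linear growth in (B1)) would need the same treatment, not only the a priori estimate.
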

We remark that  Theorem \ref{le2.5} implies Proposition \ref{th4.1} directly.

\subsection{Pontryagin's  stochastic maximum principle}
In this section, we study the mean-field stochastic maximum principle in the case where the control domain $U$ is convex. In the following, $C$ denotes a  constant that can change from line to line.

Let us consider the following perturbation of the optimal control $u^*\in \mathscr{U}_{ad}$:
\begin{equation}\label{eq 4.5}
u_t^\theta=u^*_t+\theta v_t,\ \text{where}\ v_t=u_t-u^*_t,\ \text{and~}u\in \mathscr{U}_{ad}.
\end{equation}
We denote by $X^\theta$  the associated state process with $u^\theta\in\mathscr{U}_{ad}$.
\\
\indent In what follows we introduce by now standard notations for the computations in the frame of Pontryagin's maximum principle.  Let $\phi=\sigma,~f,~h$.\ For  $u\in\mathscr{U}_{ad}$, $t\in[0,T]$, we put $\phi^u(t):=\phi(t,X^u_t,\mathbb{P}_{X^u_t},u_t)$ and $\phi(t):=\phi(t,X_t^*,\mathbb{P}_{X^*_t},u^*_t)$, where $u^*$ is the optimal control. We put
\begin{equation}\label{eq 4.6}
\left\{\begin{array}{l}
\phi_{x}(t):=\partial_x \phi\left(t,X_{t}^{*}, \mathbb{P}_{X_{t}^{*}}, u_{t}^{*}\right); \\
\phi_{u}(t):=\partial_u \phi\left(t,X_{t}^{*}, \mathbb{P}_{X_{t}^{*}}, u_{t}^{*}\right);\\
\phi_{\mu}(t, y):=\partial_{\mu} \phi\left(t,X_{t}^{*}, \mathbb{P}_{X_{t}^{*}}, u_{t}^{*} ; y\right).
\end{array}\right.
\end{equation}
Similar to $\widetilde{b}$, we denote
\begin{equation}\label{tildeb}
\left\{\begin{array}{l}
\widetilde{b}^u(t):=\widetilde{b}(t,X^u_t,E[X^u_t],\mathbb{P}_{\mathring{X}^u_t},u_t);\\
\widetilde{b}(t):=\widetilde{b}(t,X_t^*,E[X^*_t],\mathbb{P}_{\mathring{X}^*_t},u^*_t);\\
\widetilde{b}_{x}(t):=\partial_x \widetilde{b}(t,X_t^*,E[X^*_t],\mathbb{P}_{\mathring{X}^*_t}, u_{t}^{*}); \\
\widetilde{b}_{m}(t):=\partial_m \widetilde{b}(t,X_t^*,E[X^*_t],\mathbb{P}_{\mathring{X}^*_t}, u_{t}^{*});\\
\widetilde{b}_{u}(t):=\partial_u \widetilde{b}(t,X_t^*,E[X^*_t],\mathbb{P}_{\mathring{X}^*_t}, u_{t}^{*});\\
\widetilde{b}_{\mu}(t, y):=\partial_{\mu} \widetilde{b}(t,X_t^*,E[X^*_t],\mathbb{P}_{\mathring{X}^*_t}, u_{t}^{*} ; y),~t\in[0,T].
\end{array}\right.
\end{equation}
Obviously, $\phi_x(\cdot),~\widetilde{b}_{x}(\cdot),~\widetilde{b}_{m}(\cdot),~\phi_{u}(\cdot),~\widetilde{b}_{u}(\cdot),~\phi_\mu(\cdot,y),~\widetilde{b}_{\mu}(\cdot, y),\ y\in\mathbb{R}$ are progressively measurable processes on $(\Omega,\mathcal{F},\mathbb{P})$.  Let $(\widehat{\Omega},\widehat{\mathcal{F}},\widehat{\mathbb{P}})$ be the copy of $(\Omega,\mathcal{F},\mathbb{P})$.  We consider the product space $(\Omega\times\widehat{\Omega},\mathcal{F}\otimes\widehat{\mathcal{F}},\mathbb{P}\otimes\widehat{\mathbb{P}})$,\ and denote all processes defined on\ $(\widehat{\Omega},\widehat{\mathcal{F}},\widehat{\mathbb{P}})$ with superscript ''$\widehat{~~~~}$".\ Let\ $(\widehat{u}^*,\widehat{X}^*)$ be an independent copy of $(u^*,X^*)$, such that\ $\mathbb{P}_{(X^*_t,u^*_t)}=\widehat{\mathbb{P}}_{(\widehat{X}^*_t,\widehat{u}^*_t)}$.\ Then we set:
\begin{equation}\label{eq 4.7}
\left\{\begin{array}{l}
\widehat{\phi}_{\mu}(t):=\partial_{\mu} \phi(t,X_{t}^{*}, \mathbb{P}_{X_{t}^{*}}, u_{t}^{*} ;\widehat{X}_{t}^{*});\\
\widehat{\phi}_{\mu}^{\star}(t):=\partial_{\mu} \phi(t,\widehat{X}_{t}^{*}, \mathbb{P}_{X_{t}^{*}}, \widehat{u}_{t}^{*} ; X_{t}^{*});\\
\widehat{\widetilde{b}}_{\mu}(t):=\partial_{\mu} \widetilde{b}(t,X_t^*,E[X^*_t],\mathbb{P}_{\mathring{X}^*_t}, u_{t}^{*} ; \widehat{\mathring{X}}^{\substack{*\\[-4ex]~}}_t);\\
\widehat{\widetilde{b}}^{\substack{\star\\[-4ex]~}}_{\mu}(t):=\partial_{\mu} \widetilde{b}(t,\widehat{X}_t^*,E[X^*_t],\mathbb{P}_{\mathring{X}^*_t}, \widehat{u}_{t}^{*} ; \mathring{X}^*_t)
,\ t \in[0, T].
\end{array}\right.
\end{equation}

We will prove that the cost functional \ $J(\cdot)$ is\ Fr\'{e}chet differentiable. Then we have $$J'(u^*)(u(\cdot)-u^*(\cdot))\geq 0,\text{~for all~} u\in\mathscr{U}_{ad}.$$
We give the Hamiltonian function, for all $(t,x,m,\mu,u,p,q)\in[0,T]\times\mathbb{R}\times\mathbb{R}\times\mathscr{P}_2(\mathbb{R})\times U\times\mathbb{R}\times\mathbb{R}$,
\begin{equation}\label{eq 4.8}
H(t,x,m,\mu,u,p,q):=\widetilde{b}(t,x,m,\mathring{\mu},u)p+\sigma(t,x,\mu,u)q+f(t,x,\mu,u),
\end{equation}
where $\mathring{\mu}=\mu\circ[\psi]^{-1}$ is the image measure of $\mu$ w.r.t. $\psi(y)=y-\int_{\mathbb{R}}y\mu(dy),~y\in\mathbb{R}$, and ($p,q$) is the solution of adjoint equation \eqref{eq 4.9}, i.e.,
\begin{equation}\label{eq 4.9}
\left\{\begin{array}{l}
dp_t=-\{\widetilde{b}_x(t)p_t+\widetilde{b}_m(t)E[p_t]+\widehat E[\widehat{\widetilde b}_\mu^{\substack{\star\\[-4ex]~}}(t)\widehat p_t]-\widehat E[E[\widehat{\widetilde b}_\mu^{\substack{\star\\[-4ex]~}}(t)]\widehat p_t]+\sigma_x(t)q_t+\widehat E[\widehat\sigma_\mu^\star(t)\widehat q_t]\\
~~~~~~~~~~~+f_x(t)+\widehat E[\widehat{f}_\mu^\star(t)]\}dt+q_tdW_t,\\
p_T=h_x(T)+\widehat E[\widehat h_\mu^\star(T)],~t\in[0,T].
\end{array}\right.
\end{equation}
Equation\ (\ref{eq 4.9})\ is a linear\ BSDE of mean-field type. In analogy to Theorem\ \ref{le2.5},\ we  prove that it has a unique solution\ ($p,q$)\ satisfying
$$E\Big[\sup _{t \in[0, T]}\left|p_{t}\right|^{2}+\int_{0}^{T}\left|q_{t}\right|^{2} d t\Big]<+\infty.$$
\begin{lemma}\label{le4.1}
Suppose\ $Z$ solves the following  variation equation,
\begin{equation}\label{eq 4.10}
\left\{\begin{array}{l}
\begin{aligned}
d Z_{t}=&\left\{\widetilde{b}_{x}(t) Z_{t}+\widetilde{b}_{m}(t)E[Z_{t}]+\widehat{E}[\widehat{\widetilde{b}}_{\mu}(t) \widehat{\mathring{Z}}_{t}]+ \widetilde{b}_u(t) v_t\right\} d t \\
&+\left\{\sigma_{x}(t) Z_{t}+\widehat{E}[\widehat{\sigma}_{\mu}(t) \widehat{Z}_{t}]+\sigma_u(t) v_t\right\} d W_{t}, ~t\in[0,T],\\
Z_{0}=&\ 0.
\end{aligned}
\end{array}\right.
\end{equation}
(Recall the notation $\mathring{Z}_{t}=Z_t-E[Z_t]$, $\widehat{\mathring{Z}}_{t}=\widehat{Z}_t-E[Z_t]$ ).
Then it holds,
$$
\lim_{\theta\to0}\sup_{t \in[0, T]}E\left[\left|\frac{X_t^\theta-X_t^*}{\theta}-Z_t\right|^2\right]=0.
$$
\end{lemma}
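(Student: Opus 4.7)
Set $\Xi^\theta_t := \dfrac{X^\theta_t - X^*_t}{\theta} - Z_t$. The plan is to derive an SDE for $\Xi^\theta$, perform an Itô expansion, and close an $L^2$ Gronwall-type estimate that forces $\sup_{t\in[0,T]} E[|\Xi^\theta_t|^2]\to 0$. Using the decomposition $b=\widetilde b_0+\widetilde b_1$ from (H1), I would write
$$
\frac{b(s,X^\theta_s,\mathbb{P}_{X^\theta_s},u^\theta_s)-b(s,X^*_s,\mathbb{P}_{X^*_s},u^*_s)}{\theta}
= A^{\theta,1}_s + A^{\theta,2}_s + A^{\theta,3}_s + A^{\theta,4}_s,
$$
where $A^{\theta,1}$ collects the $x$-increment (treated with $\widetilde b_x$ via the fundamental theorem of calculus on the path $\lambda\mapsto X^*+\lambda(X^\theta-X^*)$), $A^{\theta,2}$ the $m=E[X]$-increment (treated with $\widetilde b_m$), $A^{\theta,3}$ the measure increment of $\mathbb{P}_{\mathring X}$ (treated with $\widehat E[\widehat{\widetilde b}_\mu\,\cdot\,]$ via Lions differentiation along a coupling between $\mathring X^*_s$ and $\mathring X^\theta_s$), and $A^{\theta,4}$ the control increment ($\widetilde b_u\cdot v_s$). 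An identical decomposition applies to $\sigma$, with all derivatives bounded by (H2). Adding and subtracting the coefficients appearing in the variational equation \eqref{eq 4.10} produces a drift of the form $\widetilde b_x(s)\Xi^\theta_s+\widetilde b_m(s)E[\Xi^\theta_s]+\widehat E[\widehat{\widetilde b}_\mu(s)\widehat{\mathring{\Xi}}^{\substack{\theta\\[-4ex]~}}_s]$ plus a remainder $R^\theta_s$ (and similarly $\widetilde R^\theta_s$ in the diffusion) that one expects to vanish in $L^2$ as $\theta\to 0$.

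I would then apply Itô's formula to $|\Xi^\theta_t|^2$ and take expectations. The three "leading" drift contractions are handled by monotonicity: $E[\widetilde b_x(s)|\Xi^\theta_s|^2]\le\alpha_2 E[|\Xi^\theta_s|^2]$ from $\partial_x\widetilde b_0\le\alpha_2$, $E[\widetilde b_m(s)E[\Xi^\theta_s]\Xi^\theta_s]\le\alpha_3 E[|\Xi^\theta_s|^2]$ from $\partial_m\widetilde b_1\le\alpha_3$ and Cauchy--Schwarz, while $E[\Xi^\theta_s\widehat E[\widehat{\widetilde b}_\mu(s)\widehat{\mathring\Xi}^{\substack{\theta\\[-4ex]~}}_s]]$ is bounded by $C E[|\Xi^\theta_s|^2]$ using boundedness of $\partial_\mu\widetilde b_1$ (and the fact that the $\widetilde b_0$ part enters only through $\widetilde b_x$). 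The quadratic variation term $|\sigma_x(s)\Xi^\theta_s+\widehat E[\widehat\sigma_\mu(s)\widehat\Xi^\theta_s]|^2$ is immediately bounded by $C(|\Xi^\theta_s|^2+\widehat E[|\widehat\Xi^\theta_s|^2])$ by (H2). Cross-terms with the remainders are absorbed by $2ab\le\varepsilon a^2+\varepsilon^{-1}b^2$, yielding
$$
E[|\Xi^\theta_t|^2]\le C\int_0^t E[|\Xi^\theta_s|^2]\,ds+E\!\int_0^T\!\!\bigl(|R^\theta_s|^2+|\widetilde R^\theta_s|^2\bigr)ds.
$$

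To conclude by Gronwall it remains to show $E\int_0^T(|R^\theta_s|^2+|\widetilde R^\theta_s|^2)ds\to 0$. For this I would first establish the $L^p$-estimate $E[\sup_{t}|X^\theta_t-X^*_t|^p]\le C\theta^p$ (uniform in small $\theta$), using Proposition \ref{th4.1} together with standard moment estimates for the controlled equation and the fact that $v=u-u^*\in\mathscr U_{ad}$ is square-integrable. The remainders have the schematic form
$$
R^\theta_s=\int_0^1\!\bigl[\widetilde b_x(s,X^*_s+\lambda\theta\Psi^\theta_s,\ldots,u^*_s+\lambda\theta v_s)-\widetilde b_x(s)\bigr]\Psi^\theta_s\,d\lambda + (\text{analogous }m,\mu,u\text{ pieces}),
$$
with $\Psi^\theta_s=(X^\theta_s-X^*_s)/\theta$. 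Since $\sup_\theta E[\sup_s|\Psi^\theta_s|^p]<\infty$ (from the $L^p$ estimate) and the partial derivatives of $\widetilde b,\widetilde b_0,\widetilde b_1,\sigma$ are continuous, with the $\widetilde b_x$-piece of polynomial growth and all other pieces bounded, the dominated/Vitali convergence theorem delivers $E\int_0^T|R^\theta_s|^2 ds\to 0$ (and similarly for $\widetilde R^\theta$). The main obstacle is precisely this last step: the polynomial growth of $\partial_x\widetilde b$ forces me to use the uniform $L^p$ control on $\Psi^\theta_s$ (any $p\ge 2$) rather than a bare $L^2$ bound, and to argue uniform integrability carefully; once this is in place, Gronwall's lemma yields $\sup_{t\in[0,T]}E[|\Xi^\theta_t|^2]\to 0$, which is the claim.
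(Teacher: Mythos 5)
Your proposal is correct and follows essentially the same route as the paper: the same first-order expansion of $\widetilde b$ and $\sigma$ along $\lambda\mapsto X^*+\lambda(X^\theta-X^*)$, the same It\^{o}--monotonicity--Gronwall estimate on $E[|\Xi^\theta_t|^2]$ using $\partial_x\widetilde b_0\le\alpha_2$, $\partial_m\widetilde b_1\le\alpha_3$ and boundedness of the measure derivatives, and the same treatment of the remainder via continuity of the derivatives, polynomial growth of $\widetilde b_x$, uniform $L^p$ moment bounds and dominated convergence. The only cosmetic difference is that you obtain uniform integrability of the remainder from the stability estimate $E[\sup_t|X^\theta_t-X^*_t|^p]\le C\theta^p$, whereas the paper uses the uniform-in-$\theta$ bound $E[\sup_t|X^\theta_t|^{4p}]\le C(1+|x_0|^{4p})$ from Proposition \ref{th4.1}; both serve the same purpose.
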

\begin{proof}[Proof.]
In analogy to Theorem \ref{le2.5}, we see that  equation\ (\ref{eq 4.10}) has a unique solution $Z\in\mathcal{S}^2(0,T;\mathbb{R})$ such that, for all  $p\in \mathbb{N}_+$,  $E[\sup\limits_{t\in[0,T]}|Z_t|^p]<\infty$.  We put $\displaystyle Y_t^\theta:=\frac{X_t^\theta-X_t^*}{\theta}-Z_t$.
Then\ $Y_0^\theta=0$, $\mathbb{P}$-a.s.,\ and\ $Y_{\cdot}^\theta$ satisfies the following\ SDE,
\begin{equation}\label{eq 4.11}
\begin{aligned}
dY_t^\theta=&\frac{1}{\theta}\Big[\widetilde{b}(t,X_t^*+\theta(Y_t^\theta+Z_t),E[X_t^*+\theta(Y_t^\theta+Z_t)],\mathbb{P}_{\mathring{X}_t^*+\theta(\mathring{Y}_t^\theta+\mathring{Z}_t)}
,u_t^*+\theta v_t )-\widetilde{b}(t)\Big]dt
\\
&-\left[\widetilde{b}_x(t)Z_t+\widetilde{b}_m(t)E[Z_t]+\widehat{E}[\widehat
{\widetilde{b}}_\mu(t)\widehat{\mathring{Z}}_t]+\widetilde{b}_u(t)v_t\right]dt
\\
&+\frac{1}{\theta}[\sigma(t,X_t^*+\theta(Y_t^\theta+Z_t),\mathbb{P}_{X_t^*+\theta(Y_t^\theta+Z_t)}
,u_t^*+\theta v_t )-\sigma(t)]dW_t
\\
&-\left[\sigma_x(t)Z_t+\widehat E[\widehat\sigma_\mu(t)\widehat Z_t]+\sigma_u(t)v_t\right]dW_t,~t\in[0,T].
\end{aligned}
\end{equation}
Since
$$\frac{d}{d\lambda}\widetilde{b}(\cdot,\cdot,\cdot,\mathbb{P}_{\mathring{X}_t^*+\lambda\theta(\mathring{Y}_t^\theta+\mathring{Z}_t)},\cdot)
=\widehat E\Big[\widetilde{b}_\mu(\cdot,\cdot,\cdot,\mathbb{P}_{\mathring{X}_t^*+\lambda\theta(\mathring{Y}_t^\theta+\mathring{Z}_t)},\cdot~;\widehat{\mathring{X}}_t^{\substack{*\\[-4ex]~}}+\lambda\theta(\widehat{\mathring{Y}}^{\substack{\theta\\[-4ex]~}}_t+\widehat{\mathring{Z}}_t))(\widehat{\mathring{Y}}_t^{\substack{\theta\\[-4ex]~}}+\widehat{\mathring{Z}}_t)\Big]\theta,
$$
 we have
\begin{equation}\label{eq4.12}
\begin{aligned}
&\frac{1}{\theta}\Big( \widetilde{b}(t,X_t^*+\theta(Y_t^\theta+Z_t),E[X_t^*+\theta(Y_t^\theta+Z_t)],\mathbb{P}_{\mathring{X}_t^*+\theta(\mathring{Y}_t^\theta+\mathring{Z}_t)}
,u_t^*+\theta v_t )-\widetilde{b}(t)\Big)
\\
=&\int_0^1 \widetilde{b}_x(t,X_t^{\lambda,\theta},E[X_t^{\lambda,\theta}],\mathbb{P}_{\mathring{X}_t^{\lambda,\theta}}
,u_t^{\lambda,\theta} )(Y_t^\theta+Z_t)d\lambda
\\
&+\int_0^1 \widetilde{b}_m(t,X_t^{\lambda,\theta},E[X_t^{\lambda,\theta}],\mathbb{P}_{\mathring{X}_t^{\lambda,\theta}}
,u_t^{\lambda,\theta} )E[Y_t^\theta+Z_t]d\lambda
\\
&+\int_0^1 \widehat E\big[ \widetilde{b}_\mu(t,X_t^{\lambda,\theta},E[X_t^{\lambda,\theta}],\mathbb{P}_{\mathring{X}_t^{\lambda,\theta}}
,u_t^{\lambda,\theta};\widehat{\mathring{X}}_t^{\substack{\lambda,\theta\\[-4ex]~}})(\widehat{\mathring{Y}}_t^{\substack{\theta\\[-4ex]~}}+\widehat{\mathring{Z}}_t)\big]d\lambda
\\
&+\int_0^1 \widetilde{b}_u(t,X_t^{\lambda,\theta},E[X_t^{\lambda,\theta}],\mathbb{P}_{\mathring{X}_t^{\lambda,\theta}}
,u_t^{\lambda,\theta})v_td\lambda,
\end{aligned}
\end{equation}
where   $X_t^{\lambda,\theta}:=X_t^*+\lambda\theta(Y_t^\theta+Z_t)$, and   $u_t^{\lambda,\theta}=u_t^*+\lambda\theta v_t$,
 (\ref{eq 4.11})\ can be rewritten as follows:
\begin{align*}
&dY^\theta_t\\
=&\bigg\{\int_0^1 \widetilde{b}_x(t,X_t^{\lambda,\theta},E[X_t^{\lambda,\theta}],\mathbb{P}_{\mathring{X}_t^{\lambda,\theta}}
,u_t^{\lambda,\theta} )Y_t^\theta d\lambda+\int_0^1 \widetilde{b}_m(t,X_t^{\lambda,\theta},E[X_t^{\lambda,\theta}],\mathbb{P}_{\mathring{X}_t^{\lambda,\theta}}
,u_t^{\lambda,\theta} )E[Y_t^\theta] d\lambda
\\&+\int_0^1 \widehat E\big[ \widetilde{b}_\mu(t,X_t^{\lambda,\theta},E[X_t^{\lambda,\theta}],\mathbb{P}_{\mathring{X}_t^{\lambda,\theta}}
,u_t^{\lambda,\theta};\widehat{\mathring{X}}_t^{\substack{\lambda,\theta\\[-4ex]~}})\widehat{\mathring{Y}}_t^{\substack{\theta\\[-4ex]~}}\big]d\lambda
\\
&+\int_0^1 (\widetilde{b}_x(t,X_t^{\lambda,\theta},E[X_t^{\lambda,\theta}],\mathbb{P}_{\mathring{X}_t^{\lambda,\theta}}
,u_t^{\lambda,\theta} )-\widetilde{b}_x(t))Z_td\lambda\\
&+\int_0^1 (\widetilde{b}_m(t,X_t^{\lambda,\theta},E[X_t^{\lambda,\theta}],\mathbb{P}_{\mathring{X}_t^{\lambda,\theta}}
,u_t^{\lambda,\theta} )-\widetilde{b}_m(t))E[Z_t]d\lambda\\
&+\int_0^1\widehat E\big[\big( \widetilde{b}_\mu(t,X_t^{\lambda,\theta},E[X_t^{\lambda,\theta}],\mathbb{P}_{\mathring{X}_t^{\lambda,\theta}}
,u_t^{\lambda,\theta};\widehat{\mathring{X}}_t^{\substack{\lambda,\theta\\[-4ex]~}})-\widehat
{\widetilde{b}}_\mu(t)\big)\widehat{\mathring{Z}}_t\big] d\lambda
\\
&+\int_0^1(\widetilde{b}_u(t,X_t^{\lambda,\theta},E[X_t^{\lambda,\theta}],\mathbb{P}_{\mathring{X}_t^{\lambda,\theta}}
,u_t^{\lambda,\theta} )-\widetilde{b}_u(t))v_td\lambda\bigg\}dt+\bigg\{\int_0^1 \sigma_x(t,X_t^{\lambda,\theta},\mathbb{P}_{X_t^{\lambda,\theta}},u_t^{\lambda,\theta} )Y_t^\theta d\lambda\\
&+\int_0^1 \widehat E[ \sigma_\mu(t,X_t^{\lambda,\theta},\mathbb{P}_{X_t^{\lambda,\theta}},u_t^{\lambda,\theta} ;\widehat X_t^{\lambda,\theta})\widehat Y_t^\theta]d\lambda
+\int_0^1 (\sigma_x(t,X_t^{\lambda,\theta},\mathbb{P}_{X_t^{\lambda,\theta}},u_t^{\lambda,\theta} )-\sigma_x(t))Z_td\lambda\\
&+ \int_0^1\widehat E [(\sigma_\mu(t,X_t^{\lambda,\theta},\mathbb{P}_{X_t^{\lambda,\theta}},u_t^{\lambda,\theta} ;\widehat X_t^{\lambda,\theta}) -\widehat \sigma_\mu(t))\widehat Z_t]  d\lambda\\
&+\int_0^1(\sigma_u(t,X_t^{\lambda,\theta},\mathbb{P}_{X_t^{\lambda,\theta}}, u_t^{\lambda,\theta} )-\sigma_u(t))v_td\lambda\bigg\}dW_t,~t\in[0,T].
\end{align*}
Applying It\^{o}'s formula to\ $|Y_t^\theta|^2$, we have
\begin{align*}
&E\Big[|Y_t^\theta|^2\Big]\\
=&E\bigg[\int_0^t 2Y_s^\theta\int_0^1\left\{ \widetilde{b}_x(s,X_s^{\lambda,\theta},E[X_s^{\lambda,\theta}],\mathbb{P}_{\mathring{X}_s^{\lambda,\theta}}
,u_s^{\lambda,\theta} )Y_s^\theta + \widetilde{b}_m(s,X_s^{\lambda,\theta},E[X_s^{\lambda,\theta}],\mathbb{P}_{\mathring{X}_s^{\lambda,\theta}}
,u_s^{\lambda,\theta} )E[Y_s^\theta]\right.\\
&\left.~~~+ \widehat E\big[ \widetilde{b}_\mu(s,X_s^{\lambda,\theta},E[X_s^{\lambda,\theta}],\mathbb{P}_{\mathring{X}_s^{\lambda,\theta}}
,u_s^{\lambda,\theta};\widehat{\mathring{X}}_s^{\substack{\lambda,\theta\\[-4ex]~}})\widehat{\mathring{Y}}_s^{\substack{\theta\\[-4ex]~}}\big]\right\}d\lambda ds\bigg]\\
&+E\bigg[\int_0^t 2Y_s^\theta\bigg\{\int_0^1 (\widetilde{b}_x(s,X_s^{\lambda,\theta},E[X_s^{\lambda,\theta}],\mathbb{P}_{\mathring{X}_s^{\lambda,\theta}}
,u_s^{\lambda,\theta} )-\widetilde{b}_x(s))Z_sd\lambda\\
&~~~~~~~+\int_0^1 (\widetilde{b}_m(s,X_s^{\lambda,\theta},E[X_s^{\lambda,\theta}],\mathbb{P}_{\mathring{X}_s^{\lambda,\theta}}
,u_s^{\lambda,\theta} )-\widetilde{b}_m(s))E[Z_s]d\lambda\\
&~~~~~~~+\int_0^1\widehat E\big[\big( \widetilde{b}_\mu(s,X_s^{\lambda,\theta},E[X_s^{\lambda,\theta}],\mathbb{P}_{\mathring{X}_s^{\lambda,\theta}}
,u_s^{\lambda,\theta};\widehat{\mathring{X}}_s^{\substack{\lambda,\theta\\[-4ex]~}})-\widehat
{\widetilde{b}}_\mu(s)\big)\widehat{\mathring{Z}}_s\big] d\lambda\\
&~~~~~~~+\int_0^1(\widetilde{b}_u(s,X_s^{\lambda,\theta},E[X_s^{\lambda,\theta}],\mathbb{P}_{\mathring{X}_s^{\lambda,\theta}}
,u_s^{\lambda,\theta} )-\widetilde{b}_u(s))v_sd\lambda\bigg\}ds\bigg]\\
&+E\bigg[\int_0^t\bigg| \int_0^1 \big(\sigma_x(s,X_s^{\lambda,\theta},\mathbb{P}_{X_s^{\lambda,\theta}},u_s^{\lambda,\theta} )Y_s^\theta
+\widehat E[ \sigma_\mu(s,X_s^{\lambda,\theta},\mathbb{P}_{X_s^{\lambda,\theta}},u_s^{\lambda,\theta} ;\widehat X_s^{\lambda,\theta})\widehat Y_s^\theta]\big)d\lambda\\
&~~~~~~~+\int_0^1 (\sigma_x(s,X_s^{\lambda,\theta},\mathbb{P}_{X_s^{\lambda,\theta}},u_s^{\lambda,\theta} )-\sigma_x(s))Z_sd\lambda\\
&~~~~~~~+\int_0^1 \widehat E [(\sigma_\mu(s,X_s^{\lambda,\theta},\mathbb{P}_{X_s^{\lambda,\theta}},u_s^{\lambda,\theta} ;\widehat X_s^{\lambda,\theta}) -\widehat \sigma_\mu(s))\widehat Z_s] d\lambda\\
&~~~~~~~+\int_0^1(\sigma_u(s,X_s^{\lambda,\theta},\mathbb{P}_{X_s^{\lambda,\theta}}, u_s^{\lambda,\theta} )-\sigma_u(s))v_sd\lambda \bigg|^2ds\bigg],~t\in[0,T].
\end{align*}
Using the assumption (H1) for $\widetilde{b}(s,x,m,\mu,u)$ w.r.t. $x$ and to $m$, we have
\begin{align*}
&\widetilde{b}_x(s,X_s^{\lambda,\theta},E[X_s^{\lambda,\theta}],\mathbb{P}_{\mathring{X}_s^{\lambda,\theta}}
,u_s^{\lambda,\theta} )|Y_s^\theta|^2\le \alpha_2|Y_s^\theta|^2,\\
&\widetilde{b}_m(s,X_s^{\lambda,\theta},E[X_s^{\lambda,\theta}],\mathbb{P}_{\mathring{X}_s^{\lambda,\theta}}
,u_s^{\lambda,\theta} )|E[Y_s^\theta]|^2\le \alpha_3|E[Y_s^\theta]|^2.
\end{align*}
Moreover, recalling that $\widetilde{b}_{\mu}$ is bounded, we get
\begin{equation}\label{eq 4.13}
\begin{aligned}
&E\Big[\sup_{r\in[0,t]}\int_0^r Y^\theta_s\int_0^1\widehat E\big[ \widetilde{b}_\mu(s,X_s^{\lambda,\theta},E[X_s^{\lambda,\theta}],\mathbb{P}_{\mathring{X}_s^{\lambda,\theta}}
,u_s^{\lambda,\theta};\widehat{\mathring{X}}_s^{\substack{\lambda,\theta\\[-4ex]~}})\widehat{\mathring{Y}}_s^{\substack{\theta\\[-4ex]~}}\big]d\lambda ds\Big]\\
\le & CE\Big[\sup_{r\in[0,t]}\int_0^r|Y^\theta_s|\widehat E[|\widehat{Y}_s^{\theta}|]ds\Big]\le C\int_0^t E[|Y^\theta_s|^2]ds,~t\in[0,T].
\end{aligned}
\end{equation}
Consequently, in virtue of the assumptions on  $\sigma_x $ and  $\sigma_\mu $, we deduce from Young's inequality  that
\begin{equation*}
E[|Y_t^\theta|^2]\leq CE[\int_0^t|Y_s^\theta|^2ds ]+\rho(\theta),~t\in[0,T],
\end{equation*}
where
\begin{align*}
\rho(\theta):=~&CE\left[\int_0^T|Z_t|^2\int_0^1|\widetilde{b}_x(t,X_t^{\lambda,\theta},E[X_t^{\lambda,\theta}],\mathbb{P}_{\mathring{X}_t^{\lambda,\theta}}
,u_t^{\lambda,\theta} )-\widetilde{b}_x(t)|^2d\lambda dt \right]
\\
&+CE\left[\int_0^T|E[Z_t]|^2\int_0^1|\widetilde{b}_m(t,X_t^{\lambda,\theta},E[X_t^{\lambda,\theta}],\mathbb{P}_{\mathring{X}_t^{\lambda,\theta}}
,u_t^{\lambda,\theta} )-\widetilde{b}_m(t)|^2d\lambda dt \right]
\\
&+CE\left[\int_0^T\int_0^1\big|\widehat E\big[\big( \widetilde{b}_\mu(t,X_t^{\lambda,\theta},E[X_t^{\lambda,\theta}],\mathbb{P}_{\mathring{X}_t^{\lambda,\theta}}
,u_t^{\lambda,\theta};\widehat{\mathring{X}}_t^{\substack{\lambda,\theta\\[-4ex]~}})-\widehat
{\widetilde{b}}_\mu(t)\big)\widehat{\mathring{Z}}_t\big]\big|^2d\lambda dt \right]
\\
&+CE\left[\int_0^T |v_t|^2\int_0^1|\widetilde{b}_u(t,X_t^{\lambda,\theta},E[X_t^{\lambda,\theta}],\mathbb{P}_{\mathring{X}_t^{\lambda,\theta}}
,u_t^{\lambda,\theta} )-\widetilde{b}_u(t)|^2d\lambda dt \right]
\\
&+CE\left[\int_0^T |Z_t|^2\int_0^1|\sigma_x (t,X_t^{\lambda,\theta},\mathbb{P}_{X_t^{\lambda,\theta}},u_t^{\lambda,\theta})-
\sigma_x(t)|^2d\lambda dt \right]
\\
&+CE\left[\int_0^T\int_0^1|\widehat E [(\sigma_\mu(t,X_t^{\lambda,\theta},\mathbb{P}_{X_t^{\lambda,\theta}},u_t^{\lambda,\theta} ;\widehat X_t^{\lambda,\theta}) -\widehat \sigma_\mu(t))\widehat Z_t]|^2d\lambda dt \right]
\\
&+CE\left[\int_0^T |v_t|^2\int_0^1|\sigma_u (t,X_t^{\lambda,\theta},\mathbb{P}_{X_t^{\lambda,\theta}},u_t^{\lambda,\theta})-
\sigma_u(t)|^2d\lambda dt \right].
\end{align*}
(Recall that $\widetilde{b}(t,x,m,\mu,u)=\widetilde{b}_0(t,x,\mu,u)+\widetilde{b}_1(t,m,\mu)$).
From the the polynomial growth condition on $b_x$  w.r.t. $(x,m,\mu,u)$, we see that
\begin{equation*}
\begin{aligned}
&\int_0^T E\left[|Z_t|^4\int_0^1|\widetilde{b}_x(t,X_t^{\lambda,\theta},E[X_t^{\lambda,\theta}],\mathbb{P}_{\mathring{X}_t^{\lambda,\theta}}
,u_t^{\lambda,\theta} )-\widetilde{b}_x(t,X_t^{*},E[X_t^{*}],\mathbb{P}_{\mathring{X}_t^{*}}
,u_t^{*} )|^4d\lambda\right]dt\\
\le&CE\Big[\Big(1+\int_0^T (|X_t^{\theta}|^{4p}+| X^*_t|^{4p})dt\Big)\sup_{t\in[0,T]}|Z_t|^4\Big]\\
\le&C(1+|x_0|^{4p}) <+\infty, \text{~for all~} \theta\in[0,1].
\end{aligned}
\end{equation*}
Hence, from the continuity of  $\widetilde{b}_x$ and Lebesgue's convergence theorem, we have
$$
E\left[\int_0^T|Z_t|^2\int_0^1|\widetilde{b}_x(t,X_t^{\lambda,\theta},E[X_t^{\lambda,\theta}],\mathbb{P}_{\mathring{X}_t^{\lambda,\theta}}
,u_t^{\lambda,\theta} )-\widetilde{b}_x(t)|^2d\lambda dt \right]\to 0,\ \theta\to 0.
$$
Similarly, we can prove that the other terms of $\rho(\theta)$ tend to 0. Concerning $$E\left[\int_0^T |v_t|^2\int_0^1|\widetilde{b}_u(t,X_t^{\lambda,\theta},E[X_t^{\lambda,\theta}],\mathbb{P}_{\mathring{X}_t^{\lambda,\theta}}
,u_t^{\lambda,\theta} )-\widetilde{b}_u(t)|^2d\lambda dt \right]$$ and also the corresponding term with $\sigma_u$, recall that $\widetilde{b}_u$ and $\sigma_u$ are continuous and bounded, while $|v|^2$ belongs to $L^1_{\mathbb{F}}(0,T;\mathbb{R})$. This allows to apply the dominated convergence theorem to deduce their convergence to zero, as $\theta\rightarrow0$.  Consequently,  $\rho(\theta)\rightarrow 0$, as $\theta\rightarrow 0$.

By using  Gronwall's inequality we obtain $$\sup_{t\in[0,T]}E[|Y_t^\theta|^2]\leq K\rho(\theta)\rightarrow 0,\ \theta\rightarrow 0 .$$
This proves the results.
\end{proof}
\begin{lemma}\label{le4.2}
The Fr\'{e}chet derivative of the cost functional is given by
\begin{equation}\label{eq 4.14}
\begin{aligned}
\left.\frac{d}{d\theta}J(u^*+\theta v)\right|_{\theta=0}=&E\left[\int_0^T(f_x(t)Z_t+\widehat E[\widehat f_\mu(t)\widehat{Z}_t]+f_u(t)v_t )dt \right]
\\
&+E\big[h_x(T)Z_T+\widehat E[\widehat h_\mu(T)\widehat{Z}_T] \big].
\end{aligned}
\end{equation}
\end{lemma}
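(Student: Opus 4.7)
The plan is to start from the definition
$$
\frac{J(u^*+\theta v)-J(u^*)}{\theta}=E\Big[\int_0^T\!\frac{f(t,X_t^\theta,\mathbb{P}_{X_t^\theta},u_t^\theta)-f(t,X_t^*,\mathbb{P}_{X_t^*},u_t^*)}{\theta}dt\Big]+E\Big[\frac{h(X_T^\theta,\mathbb{P}_{X_T^\theta})-h(X_T^*,\mathbb{P}_{X_T^*})}{\theta}\Big],
$$
and to pass to the limit $\theta\to 0$ in each term. The underlying tool is the first-order Taylor expansion along the interpolation $X_t^{\lambda,\theta}:=X_t^*+\lambda(X_t^\theta-X_t^*)$, $u_t^{\lambda,\theta}:=u_t^*+\lambda\theta v_t$, exactly as used in the proof of Lemma \ref{le4.1}, applied this time to the running cost $f$ and the terminal cost $h$ (which are differentiable in $(x,\mu,u)$ and $(x,\mu)$, respectively, by (H3)).

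More precisely, I would write, for each $t\in[0,T]$,
\begin{equation*}
\begin{aligned}
&\frac{f(t,X_t^\theta,\mathbb{P}_{X_t^\theta},u_t^\theta)-f(t,X_t^*,\mathbb{P}_{X_t^*},u_t^*)}{\theta}\\
&\quad=\int_0^1\!f_x\big(t,X_t^{\lambda,\theta},\mathbb{P}_{X_t^{\lambda,\theta}},u_t^{\lambda,\theta}\big)\,\frac{X_t^\theta-X_t^*}{\theta}\,d\lambda +\int_0^1\!f_u\big(t,X_t^{\lambda,\theta},\mathbb{P}_{X_t^{\lambda,\theta}},u_t^{\lambda,\theta}\big)v_t\,d\lambda\\
&\quad\quad+\int_0^1\!\widehat E\Big[\partial_\mu f\big(t,X_t^{\lambda,\theta},\mathbb{P}_{X_t^{\lambda,\theta}},u_t^{\lambda,\theta};\widehat X_t^{\lambda,\theta}\big)\,\frac{\widehat X_t^\theta-\widehat X_t^*}{\theta}\Big]d\lambda,
\end{aligned}
\end{equation*}
and analogously for $h$ at time $T$. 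By Lemma \ref{le4.1}, $\frac{X_t^\theta-X_t^*}{\theta}\to Z_t$ in $L^2$ uniformly in $t$, and the same statement, read on the independent copy $(\widehat\Omega,\widehat{\mathcal{F}},\widehat{\mathbb{P}})$, gives $\frac{\widehat X_t^\theta-\widehat X_t^*}{\theta}\to\widehat Z_t$ in $L^2(\widehat{\mathbb{P}})$. Combined with the continuity of $f_x,f_u,\partial_\mu f$ (and their counterparts for $h$) along $(X_t^{\lambda,\theta},\mathbb{P}_{X_t^{\lambda,\theta}},u_t^{\lambda,\theta})\to(X_t^*,\mathbb{P}_{X_t^*},u_t^*)$ (in probability, after extracting a subsequence in $\theta$), this yields the pointwise convergence of the integrands toward $f_x(t)Z_t+\widehat E[\widehat f_\mu(t)\widehat Z_t]+f_u(t)v_t$, and similarly for the terminal term.

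To upgrade pointwise convergence to convergence of the expectations, I would apply the dominated convergence theorem, using as the dominating estimates:
\begin{equation*}
\sup_{\theta\in(0,1]}E\Big[\sup_{t\in[0,T]}|X_t^\theta|^p\Big]<\infty,\qquad E\Big[\sup_{t\in[0,T]}|Z_t|^p\Big]<\infty,\qquad p\ge 2,
\end{equation*}
which follow from Proposition \ref{th4.1} (applied to $u^\theta$) and from the linear variational equation \eqref{eq 4.10}. The polynomial growth in (H3) of $f_x,f_\mu,h_x,h_\mu$, together with H\"older's inequality and these $L^p$-bounds, provides an integrable dominating function for each of the three types of terms; for the $v_t$-term, boundedness of $f_u$ from (H3) and $|v|\in L^2_{\mathbb F}(0,T;\mathbb R)$ suffice. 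For the Lions-derivative term I would use Fubini to interchange $E$ and $\widehat E$ and treat the expectation on the product space $(\Omega\times\widehat\Omega,\mathbb{P}\otimes\widehat{\mathbb{P}})$.

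The main technical obstacle I anticipate is handling the $\partial_\mu$ term: $f_\mu(t,x,\mu,u;y)$ depends on an extra variable $y$ which is evaluated at the random point $\widehat X_t^{\lambda,\theta}$, so dominated convergence must be applied in the product space after establishing a polynomial bound in $y$ uniform in $(t,\theta,\lambda)$, together with the convergence $\widehat X_t^{\lambda,\theta}\to\widehat X_t^*$ in $L^p(\widehat{\mathbb{P}})$. Once this is in place, collecting the three limiting terms for $f$ and the two for $h$ gives exactly the right-hand side of \eqref{eq 4.14}.
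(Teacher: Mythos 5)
Your proposal is correct and follows essentially the same route as the paper's proof: first-order Taylor expansion along the interpolation $X_t^{\lambda,\theta}=X_t^*+\lambda(X_t^\theta-X_t^*)$, the uniform $L^2$-convergence of $\frac{X_t^\theta-X_t^*}{\theta}$ to $Z_t$ from Lemma \ref{le4.1}, and dominated convergence justified by the $L^p$-bounds and the growth/boundedness assumptions in (H3). You in fact spell out the domination estimates and the product-space treatment of the $\partial_\mu$-term in more detail than the paper does.
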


\begin{proof}[Proof.]
Recalling the definition of cost functional, we have
$$
\frac{d}{d\theta}J(u^*+\theta v)\bigg|_{\theta=0}=\frac{d}{d\theta}E\left[\int_0^Tf(t,X_t^\theta,\mathbb{P}_{X_t^{\theta}},u^\theta_t)dt+h(X_T^\theta,\mathbb{P}_{X_T^{\theta}})\right]\bigg|_{\theta=0},$$
where,
\begin{equation*}
\begin{aligned}
&\left.\frac{d}{d\theta}E[h(X_T^\theta,\mathbb{P}_{X_T^{\theta}})]\right|_{\theta=0}=\lim_{\theta\to 0}E\left[\frac{h(X_T^\theta,\mathbb{P}_{X_T^{\theta}})-h(X_T^*,\mathbb{P}_{X_T^*})}{\theta} \right]
\\
=&\lim_{\theta\to 0}E\left[\int_0^1h_x(X_T^*+\lambda(X_T^\theta-X_T^*),\mathbb{P}_{{X_T^*}+\lambda(X_T^\theta-X_T^*) })\frac{(X_T^\theta-X_T^*) }{\theta}d\lambda \right]
\\
&+\lim_{\theta\to 0}E\left[\!\int_0^1\!\widehat E\left[h_\mu(X_T^*\!+\!\lambda(X_T^\theta-X_T^*),\mathbb{P}_{{X_T^*}+\lambda(X_T^\theta-X_T^*) };\widehat X_T^*\!+\!\lambda(\widehat  X_T^\theta\!-\!\widehat X_T^* ))\frac{(\widehat{X}_T^\theta-\widehat{X}_T^*) }{\theta}\!\right]\!d\lambda\! \right].
\end{aligned}
\end{equation*}
Since, due to Lemma \ref{le4.1},  $\displaystyle Y_t^\theta:=\frac{X_t^\theta-X_t^*}{\theta}-Z_t\rightarrow0$ in $L^2$,
 as $\theta\to0$, uniformly w.r.t. $t\in[0,T]$, we get thanks to the dominated convergence theorem
$$E\left[\int _0^1 h_x\left(X^*_T+\lambda\left(X_T^\theta-X^*_T\right),
\mathbb{P}_{{X_T^*}+\lambda(X_T^\theta-X_T^*) }\right)\frac{X_T^\theta-X^*_T}{\theta}d\lambda\right]\to E\left[h_x\left(X^*_T,\mathbb{P}_{X^*_T}\right)Z_T\right].$$
Similarly, as $\theta\to0 $, we have,
\begin{equation*}
\begin{aligned}
&E\left[\int _0^1\widehat{E} \left[h_\mu\left(X_T^*+\lambda(X_T^\theta-X_T^*),
\mathbb{P}_{{X_T^*}+\lambda(X_T^\theta-X_T^*)};\widehat{X}_T^*+\lambda(\widehat X_T^\theta-\widehat X_T^*)\right)\frac{\widehat{X}_T^\theta-\widehat{X}^*_T}{\theta}\right]d\lambda\right]\\
&\to E\left[\widehat{E}[\widehat h_\mu(T)\widehat{Z}_T]\right],
\end{aligned}
\end{equation*}
and also
$$\left.\frac{d}{d\theta}E\left[\int_0^Tf(X_t^\theta,\mathbb{P}_{X_t^\theta},u_t^\theta)dt\right]\right|_{\theta=0}
=E\left[\int_0^T(f_x(t)Z_t+\widehat E[\widehat f_\mu(t)\widehat Z_t]+ f_u(t)v_t)dt \right].
$$
The proof is complete now.
\end{proof}
The  dual relationship between adjoint process and the variational process is given by the following lemma. It is obtained by a straight-forward computation of $E[p_tZ_t]$ by using It\^{o}'s formula.
\begin{lemma}\label{le4.3}
Suppose\ ($p,q$)\ are the adjoint processes satisfying equation  (\ref{eq 4.9}), and $Z$ is the variational process satisfying equation (\ref{eq 4.10}). Then we have,
$$
E[p_TZ_T]=E\left[\int_0^T(p_t\widetilde{b}_u(t)v_t-Z_tf_x(t)-\widehat E[\widehat f_\mu(t)\widehat Z_t] +q_t\sigma_u(t)v_t)dt\right].
$$
\end{lemma}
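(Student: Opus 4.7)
The plan is to apply It\^o's product rule to $p_t Z_t$ on $[0,T]$, take expectations, and then harvest the cancellations via the mean-field Fubini swap on the product space $(\Omega \times \widehat{\Omega}, \mathcal{F} \otimes \widehat{\mathcal{F}}, \mathbb{P} \otimes \widehat{\mathbb{P}})$. Since $Z_0 = 0$,
\begin{equation*}
E[p_T Z_T] = E\int_0^T p_t\,dZ_t + E\int_0^T Z_t\,dp_t + E\int_0^T d\langle p, Z\rangle_t,
\end{equation*}
and the two Brownian stochastic integrals vanish in expectation thanks to $(p,q) \in \mathcal{S}^2 \times L^2_\mathbb{F}$, $Z \in \mathcal{S}^2$ with moments of all orders (by the analogue of \eqref{eq 2.14}), the polynomial growth of $\widetilde{b}_x$, and the boundedness of the remaining derivatives under (H1)--(H3).

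Substituting \eqref{eq 4.10} into $p\,dZ$ and \eqref{eq 4.9} into $Z\,dp$ produces a long sum. The $\widetilde{b}_x\,p_t Z_t$ piece from $p\,dZ$ and the $\sigma_x q_t Z_t$ piece from $d\langle p,Z\rangle$ cancel immediately against their signed copies in $Z\,dp$. The two $\widetilde{b}_m$ contributions also cancel: by the decomposition $\widetilde{b} = \widetilde{b}_0 + \widetilde{b}_1$ in (H1), the coefficient $\widetilde{b}_m(t) = \partial_m \widetilde{b}_1(t, E[X_t^*], \mathbb{P}_{\mathring{X}_t^*})$ depends only on deterministic quantities, so
\begin{equation*}
E\bigl[p_t \widetilde{b}_m(t) E[Z_t]\bigr] = \widetilde{b}_m(t)\, E[p_t]\, E[Z_t] = E\bigl[Z_t \widetilde{b}_m(t) E[p_t]\bigr].
\end{equation*}

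The combinatorial core is the treatment of the Lions-derivative terms. Interchanging $\omega \leftrightarrow \widehat{\omega}$ on the product space and comparing the definitions in \eqref{eq 4.7}, one finds that $\widehat{\widetilde{b}}_\mu(t)$ turns into $\widehat{\widetilde{b}}_\mu^{\star}(t)$ and $\widehat{\mathring{Z}}_t$ turns into $\mathring{Z}_t = Z_t - E[Z_t]$, hence
\begin{equation*}
E\!\left[p_t \widehat{E}[\widehat{\widetilde{b}}_\mu(t) \widehat{\mathring{Z}}_t]\right] = E\!\left[\mathring{Z}_t\, \widehat{E}[\widehat{\widetilde{b}}_\mu^{\star}(t) \widehat{p}_t]\right] = E\!\left[Z_t\, \widehat{E}[\widehat{\widetilde{b}}_\mu^{\star}(t) \widehat{p}_t]\right] - E[Z_t]\, \widehat{E}\!\left[\widehat{p}_t\, E[\widehat{\widetilde{b}}_\mu^{\star}(t)]\right],
\end{equation*}
which is exactly (with opposite sign) the combined $\widetilde{b}_\mu$-contribution $-E\int_0^T Z_t\{\widehat{E}[\widehat{\widetilde{b}}_\mu^{\star}(t) \widehat{p}_t] - \widehat{E}[E[\widehat{\widetilde{b}}_\mu^{\star}(t)] \widehat{p}_t]\}\,dt$ coming from $Z\,dp$. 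An identical swap applied to the cross-variation yields $E[q_t \widehat{E}[\widehat{\sigma}_\mu(t) \widehat{Z}_t]] = E[Z_t \widehat{E}[\widehat{\sigma}_\mu^{\star}(t) \widehat{q}_t]]$, cancelling the $\sigma_\mu$ term in $Z\,dp$.

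After these cancellations the only surviving contributions are $E\int_0^T (p_t \widetilde{b}_u(t) + q_t \sigma_u(t)) v_t\,dt$ from $p\,dZ$ and $d\langle p,Z\rangle$, together with $-E\int_0^T Z_t (f_x(t) + \widehat{E}[\widehat{f}_\mu^{\star}(t)])\,dt$ from $Z\,dp$. A final Fubini swap converts $E[Z_t \widehat{E}[\widehat{f}_\mu^{\star}(t)]]$ into $E[\widehat{E}[\widehat{f}_\mu(t) \widehat{Z}_t]]$ and yields the identity as stated. The only real obstacle is the bookkeeping of the Lions swap: one must verify that the corrective term $-\widehat{E}[E[\widehat{\widetilde{b}}_\mu^{\star}(t)] \widehat{p}_t]$ in the adjoint equation is precisely what is needed to absorb the centering $\widehat{\mathring{Z}}_t = \widehat{Z}_t - E[Z_t]$ appearing in the variational equation.
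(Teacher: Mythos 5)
Your proposal is correct and follows exactly the route the paper indicates (the paper gives no written proof beyond the remark that the identity ``is obtained by a straight-forward computation of $E[p_tZ_t]$ by using It\^{o}'s formula''): It\^{o}'s product rule, vanishing of the martingale parts, and the Fubini swap on $(\Omega\times\widehat\Omega,\mathbb{P}\otimes\widehat{\mathbb{P}})$ for the $\partial_\mu$-terms. In particular your verification that the centering $\widehat{\mathring{Z}}_t=\widehat Z_t-E[Z_t]$ is exactly absorbed by the corrective term $-\widehat E[E[\widehat{\widetilde b}_\mu^{\star}(t)]\widehat p_t]$ in \eqref{eq 4.9}, and that $\widetilde b_m(t)$ is deterministic by the decomposition in (H1), supplies precisely the bookkeeping the paper leaves implicit.
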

Thus, from Lemma \ref{le4.1}, Lemma \ref{le4.2} and \ $ E[p_TZ_T]=E[ h_x(T)Z_T+\widehat E[\widehat h_\mu(T)\widehat Z_T]]$,\
we  get the following corollary.
\begin{corollary}\label{cor 4.1}
The Fr\'{e}chet derivative of the cost functional can be rewritten as
\begin{equation*}
\begin{aligned}
\frac{d}{d\theta}J(u^*+\theta v)\bigg|_{\theta=0}
=&E\left[\int_0^T(f_u(t)v_t+p_t\widetilde{b}_u(t)v_t+q_t\sigma_u(t)v_t )dt\right]
\\
=&E\left[ \int_0^T\frac{d}{du}H( t,X_t^*,E[X_t^*],\mathbb{P}_{X_t^*},u_t^*,p_t,q_t )v_tdt\right].
\end{aligned}
\end{equation*}
\end{corollary}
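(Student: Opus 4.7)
The corollary is essentially a direct assembly of the three preceding results, so the plan is to substitute Lemma~\ref{le4.3} into Lemma~\ref{le4.2} and then recognize the resulting integrand as $\partial_u H$.

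First, I would exploit the terminal condition $p_T=h_x(T)+\widehat E[\widehat h_\mu^{\star}(T)]$ of the adjoint equation to rewrite $E[p_TZ_T]$. The point is that, because $(\widehat X^*,\widehat Z,\widehat p)$ is an independent copy of $(X^*,Z,p)$ on the product space $(\Omega\times\widehat\Omega,\mathcal F\otimes\widehat{\mathcal F},\mathbb P\otimes\widehat{\mathbb P})$, the identity
\[
E\big[Z_T\,\widehat E[\widehat h_\mu^{\star}(T)]\big]
= E\otimes\widehat E\big[\partial_\mu h(\widehat X_T^*,\mathbb P_{X_T^*};X_T^*)\,Z_T\big]
= E\big[\widehat E[\widehat h_\mu(T)\widehat Z_T]\big]
\]
holds by Fubini (swapping the roles of $\omega$ and $\widehat\omega$). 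Consequently $E[p_TZ_T]=E[h_x(T)Z_T+\widehat E[\widehat h_\mu(T)\widehat Z_T]]$, which is precisely the boundary term appearing in Lemma~\ref{le4.2}.

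Next, I would insert the expression for $E[p_TZ_T]$ given by Lemma~\ref{le4.3} into the formula in Lemma~\ref{le4.2}. The two terms $E\big[\int_0^T Z_tf_x(t)\,dt\big]$ and $E\big[\int_0^T\widehat E[\widehat f_\mu(t)\widehat Z_t]\,dt\big]$ cancel exactly, leaving
\[
\left.\tfrac{d}{d\theta}J(u^*+\theta v)\right|_{\theta=0}
= E\!\left[\int_0^T\!\big(f_u(t)+p_t\widetilde b_u(t)+q_t\sigma_u(t)\big)v_t\,dt\right],
\]
which is the first claimed expression.

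Finally, for the second equality I would differentiate the Hamiltonian $H(t,x,m,\mu,u,p,q)=\widetilde b(t,x,m,\mathring\mu,u)p+\sigma(t,x,\mu,u)q+f(t,x,\mu,u)$ with respect to $u$ at the optimal arguments: this yields $\partial_u H(t,X_t^*,E[X_t^*],\mathbb P_{X_t^*},u_t^*,p_t,q_t)=\widetilde b_u(t)p_t+\sigma_u(t)q_t+f_u(t)$, matching the integrand above. There is no genuine obstacle here; the only step requiring a small amount of care is the symmetrization argument used to identify $E[Z_T\widehat E[\widehat h_\mu^{\star}(T)]]$ with $E[\widehat E[\widehat h_\mu(T)\widehat Z_T]]$, and the analogous identities that are implicitly built into Lemma~\ref{le4.3} when the $\partial_\mu$-terms of the adjoint equation are paired against $Z$.
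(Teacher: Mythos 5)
Your proposal is correct and follows essentially the same route as the paper: it combines Lemma \ref{le4.2} with Lemma \ref{le4.3} via the identity $E[p_TZ_T]=E[h_x(T)Z_T+\widehat E[\widehat h_\mu(T)\widehat Z_T]]$, cancels the $f_x$ and $f_\mu$ terms, and identifies the remainder with $\partial_u H$. The only difference is that you spell out the Fubini/symmetrization argument behind the terminal identity, which the paper simply asserts.
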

Now we are able to state the main theorem in this section.
\begin{theorem}\label{th4.2}
Under the assumptions  $\rm(H1)$-$\rm(H3)$, if $(X^*,u^*)$ is the optimal pair for our  stochastic control problem, then  the following inequality is satisfied,
\begin{equation}\label{4.11'}
H_u(t,X_t^*,E[X_t^*],\mathbb{P}_{X_t^*},u_t^*,p_t,q_t)(u-u_t^*)\geq0,~dtd\mathbb{P} \text{-a.e.,}~u\in U,
\end{equation}
where  $(p,q)$  is the unique solution of the adjoint BSDE (\ref{eq 4.9}).
\end{theorem}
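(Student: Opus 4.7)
The plan is to derive the pointwise inequality from the variational inequality that optimality enforces, via the Fr\'echet derivative formula already obtained in Corollary \ref{cor 4.1}. Concretely, I would proceed in three steps, relying on the convexity of $U$ and a standard localization argument.

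\textbf{Step 1: variational inequality at level of the cost functional.} Fix an arbitrary $u\in\mathscr{U}_{ad}$ and set $v_t:=u_t-u_t^*$, $u^\theta_t:=u_t^*+\theta v_t=(1-\theta)u_t^*+\theta u_t$ for $\theta\in[0,1]$. Since $U$ is convex, $u^\theta\in\mathscr{U}_{ad}$. The optimality of $u^*$ yields $J(u^\theta)-J(u^*)\ge 0$, and dividing by $\theta>0$ and letting $\theta\downarrow 0$, Lemma \ref{le4.2} (Fr\'echet differentiability of $J$) together with Corollary \ref{cor 4.1} produces
\begin{equation*}
E\!\left[\int_0^T H_u(t,X_t^*,E[X_t^*],\mathbb{P}_{X_t^*},u_t^*,p_t,q_t)\,(u_t-u_t^*)\,dt\right]\ge 0,\qquad u\in\mathscr{U}_{ad}.
\end{equation*}

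\textbf{Step 2: localization to reach the pointwise inequality.} Pick any deterministic $w\in U$ and any $\mathbb{F}$-predictable process $\beta:[0,T]\times\Omega\to[0,1]$. Define $u_t:=(1-\beta_t)u_t^*+\beta_t w$; by convexity of $U$ this is admissible, so $v_t=u_t-u_t^*=\beta_t(w-u_t^*)$ may be inserted into the inequality of Step 1, giving
\begin{equation*}
E\!\left[\int_0^T \beta_t\, H_u(t,X_t^*,E[X_t^*],\mathbb{P}_{X_t^*},u_t^*,p_t,q_t)(w-u_t^*)\,dt\right]\ge 0.
\end{equation*}
Since $\beta$ is an arbitrary $[0,1]$-valued predictable process, a standard measurable selection argument (take $\beta_t=\mathbf{1}_A$ for arbitrary predictable $A\subset[0,T]\times\Omega$) forces
\begin{equation*}
H_u(t,X_t^*,E[X_t^*],\mathbb{P}_{X_t^*},u_t^*,p_t,q_t)(w-u_t^*)\ge 0,\qquad dt\,d\mathbb{P}\text{-a.e.,}
\end{equation*}
for every fixed $w\in U$.

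\textbf{Step 3: free the point $w$.} The exceptional null set a priori depends on $w$, but $U\subset\mathbb{R}^m$ is separable, so choose a countable dense set $\{w_k\}_{k\ge 1}\subset U$ and take the union of the corresponding null sets to get a single $dt\,d\mathbb{P}$-null set outside of which the inequality holds simultaneously for every $w_k$. The continuity of $H_u$ in $u$ (from (H1)--(H3), since $\widetilde b_u,\sigma_u,f_u$ are continuous) then yields the inequality for every $w\in U$, which is \eqref{4.11'}. The only part requiring a little care is verifying that the integrand in Step 1 is well-defined and integrable, so that the $\beta$-localization is legitimate; this follows from the $L^2$-bounds on $(p,q)$ noted below \eqref{eq 4.9}, the polynomial growth and boundedness assumptions in (H1)--(H3), and the estimate \eqref{eq 4.4} on $X^*$. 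The main conceptual obstacle, namely Fr\'echet-differentiability of $J$ through the monotone coefficients, has already been overcome in Lemma \ref{le4.1} and Lemma \ref{le4.2}, so what remains here is essentially the classical convex-domain maximum principle machinery.
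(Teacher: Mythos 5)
Your proof is correct and follows the same route as the paper: optimality of $u^*$ along the convex perturbation $u^\theta=u^*+\theta(u-u^*)$ combined with the Fr\'echet derivative formula of Corollary \ref{cor 4.1} yields the integrated inequality $E\big[\int_0^T H_u(t,X_t^*,E[X_t^*],\mathbb{P}_{X_t^*},u_t^*,p_t,q_t)(u_t-u_t^*)\,dt\big]\ge 0$, which is exactly the paper's argument. Your Steps 2 and 3 (localization via indicator-process perturbations $u_t=(1-\beta_t)u_t^*+\beta_t w$ and a countable-density argument in $w$, where one should only note that the relevant sets need merely be progressively measurable rather than predictable, since admissible controls are only required to be adapted) supply the passage from the integrated inequality to the pointwise statement \eqref{4.11'}, a step the paper asserts without detail; this is a standard but genuine completion of the argument.
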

\begin{proof}[Proof.]
Since\ $u^*$ is an optimal control, we have
$$
\left.0\leq\frac{d}{d\theta}J(u^*+\theta(u-u^*))\right|_{\theta=0}=E\left[\int_0^T\frac{d}{du}
H(t,X_t^*,E[X_t^*],\mathbb{P}_{X_t^*},u_t^*,p_t,q_t)(u_t-u_t^*)dt\right].
$$
The proof is complete.
\end{proof}
\subsection{Sufficient conditions}
Let us  give now the sufficient conditions for the optimal control problem.
To this end, we add the following assumption.
\begin{itemize}
\item[(H4)]The function $h(x, \mu)$ is convex in $(x, \mu)$,  $\widetilde{b}(t,x,m,\mathring{\mu},u)$ is convex in $\mathring{\mu}$, the functions $\sigma(t,x,\mu,u)$ and $f(t,x,\mu,u)$ are convex in $\mu$, as well as $H(t,x,m,\mu,p_t,q_t,u)$ is convex in $(x,m,u)$.
\end{itemize}
\begin{theorem}
Assume the conditions $\rm(H1)\text{-}(H4)$ are satisfied and let $u^*\in {\mathscr{U}}_{ad}$ with the associated state process $X^*_t$ be given, and let $(p, q)$ be the solution to the adjoint equation (\ref{eq 4.9}). Then, if $u^*$ satisfies the necessary condition  \eqref{4.11'} in Theorem \ref{th4.2}, i.e.,
\begin{equation*}
H_u(t,X_t^*,E[X_t^*],\mathbb{P}_{X_t^*},u_t^*,p_t,q_t)(u-u_t^*)\geq0,~dtd\mathbb{P} \text{-a.e.,}~u\in U,
\end{equation*}
\end{theorem}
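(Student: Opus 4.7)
The plan is to show that under the convexity assumption (H4), the necessary first-order condition already forces $J(u)\geq J(u^{*})$ for every $u\in\mathscr{U}_{ad}$, so that $u^{*}$ is optimal. Fix any admissible $u$ with associated state $X^{u}$, and write
\[
J(u)-J(u^{*}) = E\!\left[\int_{0}^{T}\big(f^{u}(t)-f(t)\big)dt\right] + E\!\left[h(X^{u}_{T},\mathbb{P}_{X^{u}_{T}})-h(X^{*}_{T},\mathbb{P}_{X^{*}_{T}})\right].
\]
I will bound the two pieces separately.

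First, for the terminal term, the convexity of $h$ in $(x,\mu)$ together with the representation of Wasserstein derivatives through an independent copy $(\widehat X^{*},\widehat X^{u})$ gives
\[
E\big[h(X^{u}_{T},\mathbb{P}_{X^{u}_{T}})-h(X^{*}_{T},\mathbb{P}_{X^{*}_{T}})\big]\;\geq\; E\big[h_{x}(T)(X^{u}_{T}-X^{*}_{T})\big] + E\big[\widehat E[\widehat h_{\mu}(T)(\widehat X^{u}_{T}-\widehat X^{*}_{T})]\big].
\]
A standard Fubini/relabeling argument (the same one used in Corollary \ref{cor 4.1}) identifies the right-hand side with $E[p_{T}(X^{u}_{T}-X^{*}_{T})]$ thanks to the terminal condition $p_{T}=h_{x}(T)+\widehat E[\widehat h^{\star}_{\mu}(T)]$.

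Next, I will apply It\^o's formula to $p_{t}(X^{u}_{t}-X^{*}_{t})$ on $[0,T]$, using the adjoint BSDE (\ref{eq 4.9}) and the dynamics $d(X^{u}_{t}-X^{*}_{t})=(\widetilde b^{u}(t)-\widetilde b(t))dt+(\sigma^{u}(t)-\sigma(t))dW_{t}$. Because $X^{u}_{0}=X^{*}_{0}=x_{0}$, taking expectation and then using exactly the same Fubini/copy-space manipulation as in Lemma \ref{le4.3} to collapse the terms $\widehat E[\widehat{\widetilde b}^{\star}_{\mu}\widehat p\,]$, $\widehat E[\widehat\sigma^{\star}_{\mu}\widehat q\,]$, $\widehat E[\widehat f^{\star}_{\mu}]$ into the dual integrals $\widetilde b_{m}, \widetilde b_{\mu}, \sigma_{\mu}, f_{\mu}$ evaluated against $X^{u}-X^{*}$, I arrive at
\[
\begin{aligned}
E\big[p_{T}(X^{u}_{T}-X^{*}_{T})\big] &= E\!\left[\int_{0}^{T}\big(H^{u}(t)-H(t)-(f^{u}(t)-f(t))\big)dt\right]\\
&\quad - E\!\left[\int_{0}^{T}\Big(H_{x}(t)(X^{u}_{t}-X^{*}_{t}) + H_{m}(t)(E[X^{u}_{t}]-E[X^{*}_{t}])\Big)dt\right] - \mathcal{M},
\end{aligned}
\]
where $\mathcal{M}$ collects precisely the measure-derivative terms
$E\big[\int_{0}^{T}\widehat E[\widehat{\widetilde b}_{\mu}(t)(\widehat{\mathring X}{}^{u}_{t}-\widehat{\mathring X}{}^{*}_{t})]\,p_{t}+\widehat E[\widehat\sigma_{\mu}(t)(\widehat X^{u}_{t}-\widehat X^{*}_{t})]\,q_{t}+\widehat E[\widehat f_{\mu}(t)(\widehat X^{u}_{t}-\widehat X^{*}_{t})]dt\big]$
that arise from the $\mu$-directions of $\widetilde b, \sigma, f$. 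The presence of the centering $\mathring{\phantom X}$ in the $\widetilde b$ term is the reason for the auxiliary correction $-\widehat E[E[\widehat{\widetilde b}^{\star}_{\mu}]\widehat p]$ in the adjoint equation, and the two Fubini terms will cancel in precisely the manner dictated by that correction.

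Combining the two bounds, $J(u)-J(u^{*})$ dominates
\[
E\!\left[\int_{0}^{T}\Big(H^{u}(t)-H(t)-H_{x}(t)(X^{u}_{t}-X^{*}_{t})-H_{m}(t)(E[X^{u}_{t}]-E[X^{*}_{t}])-\mathcal{M}\text{-integrand}\Big)dt\right].
\]
Now I invoke (H4): the convexity of $\widetilde b$ in $\mathring\mu$, of $\sigma$ and $f$ in $\mu$, and of $H$ in $(x,m,u)$ together yield the global convexity-type inequality
\[
H^{u}(t)-H(t)\;\geq\;H_{x}(t)(X^{u}_{t}-X^{*}_{t})+H_{m}(t)(E[X^{u}_{t}]-E[X^{*}_{t}])+\widehat E\text{-measure terms}+H_{u}(t)(u_{t}-u^{*}_{t}),
\]
where the measure terms are exactly those forming $\mathcal{M}$. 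Hence everything except the $H_{u}$-term cancels, and
\[
J(u)-J(u^{*})\;\geq\;E\!\left[\int_{0}^{T}H_{u}(t)(u_{t}-u^{*}_{t})dt\right]\;\geq\;0,
\]
by the assumed necessary condition (\ref{4.11'}). Since $u\in\mathscr{U}_{ad}$ was arbitrary, $u^{*}$ is optimal.

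The delicate step, and the one I would be most careful about, is the matching of the measure-derivative contributions between the It\^o expansion and the convexity inequality for $H$: the $\widetilde b$-part uses the centered variable $\mathring X$ and therefore pairs with the combination $\widehat E[\widehat{\widetilde b}^{\star}_{\mu}\widehat p]-\widehat E[E[\widehat{\widetilde b}^{\star}_{\mu}]\widehat p]$ in (\ref{eq 4.9}), while the $\sigma,f$-parts use the uncentered law. Verifying that the copy-space Fubini produces exactly the terms appearing in $\mathcal{M}$ (so that they cancel cleanly against the convexity lower bound) is the main bookkeeping obstacle; every other step is standard once this is in place.
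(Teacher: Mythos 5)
Your proposal is correct and follows essentially the same route as the paper: convexity of $h$ for the terminal term, It\^{o}'s formula applied to $p_t(X^u_t-X^*_t)$ combined with the copy-space Fubini identification of the measure-derivative terms, the convexity inequality for $H$ in $(x,m,\mu,u)$ to cancel everything except the $H_u$-term, and finally the necessary condition \eqref{4.11'}. The only cosmetic difference is that you work with $J(u)-J(u^*)\geq 0$ while the paper bounds $J(u^*)-J(u)\leq 0$; the bookkeeping of the centered $\mathring{X}$ terms against the $\widehat E[\widehat{\widetilde b}^{\star}_{\mu}\widehat p]-\widehat E[E[\widehat{\widetilde b}^{\star}_{\mu}]\widehat p]$ structure of \eqref{eq 4.9} that you flag as the delicate step is handled exactly as you describe.
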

 $u^*$ is an optimal control.
\begin{proof}[Proof.]
For all $u\in {\mathscr{U}}_{ad}$, we have
\begin{equation*}
J(u^*)-J(u)= E\left[h(X^*_T,\mathbb{P}_{X_T^*})- h(X^u_T,\mathbb{P}_{X_T^u}) \right]+E\left[\int_0^T \left(f(t,X^*_t,\mathbb{P}_{X_t^*},u^*_t)- f(t,X^u_t,\mathbb{P}_{X_t^u},u_t)\right)dt\right].
\end{equation*}
Since $h$ is convex in $(x,\mu)$,
\begin{equation*}
h(X^*_T,\mathbb{P}_{X_T^*})- h(X^u_T,\mathbb{P}_{X_T^u})\leq h_x(X^*_T,\mathbb{P}_{X_T^*})(X_T^*-X_T^u)+\widehat{E}\left[\partial_{\mu}h(X^*_T,\mathbb{P}_{X_T^*};\widehat{X}_T^*)(\widehat{X}_T^*-\widehat{X}_T^u)   \right].
\end{equation*}
Then,
\begin{equation}\label{eq4.13}
\begin{aligned}
J(u^*)-J(u)\leq &E \Big[ h_x(X^*_T,\mathbb{P}_{X_T^*})(X_T^*-X_T^u)+\widehat{E}[\partial_{\mu}h(X^*_T,\mathbb{P}_{X_T^*};\widehat{X}_T^*)(\widehat{X}_T^*-\widehat{X}_T^u)   ]\Big]\\
&+E\left[\int_0^T \left(f(t,X^*_t,\mathbb{P}_{X_t^*},u^*_t)- f(t,X^u_t,\mathbb{P}_{X_t^u},u_t)\right)dt\right].
\end{aligned}
\end{equation}
On the other hand, by applying It\^{o}'s formula, we get
\begin{equation*}
\begin{aligned}
E\left[p_T(X_T^*-X_T^u)\right]
=&E\left[\int_0^T \left(H(t,X_t^*,E[X_t^*],\mathbb{P}_{X_t^*},u_t^*,p_t,q_t)-H(t,X_t^u,E[X_t^u],\mathbb{P}_{X_t^u},u_t,p_t,q_t)\right)dt\right]\\
&-E\left[\int_0^T \left(f(t,X_t^*,\mathbb{P}_{X_t^*},u_t^*)-f(t,X_t^u,\mathbb{P}_{X_t^u},u_t)\right)dt\right]\\
&-E\Big[\int_0^T \Big( H_x(t,X_t^*,E[X_t^*],\mathbb{P}_{X_t^*},u_t^*,p_t,q_t)(X_t^*-X_t^u)\\
&~~~~~~+H_m(t,X_t^*,E[X_t^*],\mathbb{P}_{X_t^*},u_t^*,p_t,q_t)E[X_t^*-X_t^u]\\
&~~~~~~+\widehat{E}[\widehat{\widetilde{b}}_{\mu}(t)p_t(\widehat{\mathring{X}}_t^{\substack{*\\[-4ex]~}}-\widehat{\mathring{X}}_t^{\substack{u\\[-4ex]~}})+(\widehat{\sigma}_\mu(t)q_t+\widehat{f}_\mu(t))(\widehat{X}_t^*-\widehat{X}_t^u)]\Big)dt\Big].
\end{aligned}
\end{equation*}
From (\ref{eq4.13}) we have
\begin{equation}\label{5.13}
\begin{aligned}
J(u^*)-J(u)
\leq&E\left[\int_0^T \left(H(X_t^*,E[X_t^*],\mathbb{P}_{X_t^*},u_t^*,p_t,q_t)-H(X_t^u,E[X_t^u],\mathbb{P}_{X_t^u},u_t,p_t,q_t)\right)dt\right]\\
&-E\Big[\int_0^T \Big( H_x(t,X_t^*,E[X_t^*],\mathbb{P}_{X_t^*},u_t^*,p_t,q_t)(X_t^*-X_t^u)\\
&~~~~~~+H_m(t,X_t^*,E[X_t^*],\mathbb{P}_{X_t^*},u_t^*,p_t,q_t)E[X_t^*-X_t^u]\\
&~~~~~~+\widehat{E}[\widehat{\widetilde{b}}_{\mu}(t)p_t(\widehat{\mathring{X}}_t^{\substack{*\\[-4ex]~}}-\widehat{\mathring{X}}_t^{\substack{u\\[-4ex]~}})+(\widehat{\sigma}_\mu(t)q_t+\widehat{f}_\mu(t))(\widehat{X}_t^*-\widehat{X}_t^u)]\Big)dt\Big].
\end{aligned}
\end{equation}
Recalling the convexity of $H$, we have
\begin{equation*}
\begin{aligned}
&H(t,X_t^*,E[X_t^*],\mathbb{P}_{X_t^*},u_t^*,p_t,q_t)-H(t,X_t^u,E[X_t^u],\mathbb{P}_{X_t^u},u_t,p_t,q_t)\\
\leq & H_x(t,X_t^*,E[X_t^*],\mathbb{P}_{X_t^*},u_t^*,p_t,q_t)(X_t^*-X_t^u)+H_m(t,X_t^*,E[X_t^*],\mathbb{P}_{X_t^*},u_t^*,p_t,q_t)E[X_t^*-X_t^u]\\
&+H_u(t,X_t^*,E[X_t^*],\mathbb{P}_{X_t^*},u_t^*,p_t,q_t)(u_t^*-u_t)\\
&+\widehat{E}[\widehat{\widetilde{b}}_{\mu}(t)p_t(\widehat{\mathring{X}}_t^{\substack{*\\[-4ex]~}}-\widehat{\mathring{X}}_t^{\substack{u\\[-4ex]~}})+(\widehat{\sigma}_\mu(t)q_t+\widehat{f}_\mu(t))(\widehat{X}_t^*-\widehat{X}_t^u)].
\end{aligned}
\end{equation*}
We substitute the above inequality in \eqref{5.13}, and so we obtain
\begin{equation*}
J(u^*)-J(u)
\leq E\left[\int_0^T H_u(t,X_t^*,\mathbb{P}_{X_t^*},u_t^*,p_t,q_t)(u_t^*-u_t) dt\right].
\end{equation*}
From our condition \eqref{4.11'} the proof is complete.
\end{proof}

\end{document}